\begin{document}

\newcommand{\pp}{\mathbf p}
\newcommand{\zz}{\mathbf z}

\title{Topological invariants and Holomorphic Mappings}
\author{R. E. Greene, K.-T. Kim \and N. V. Shcherbina}

\address{(Greene) Department of Mathematics, University of California, Los Angeles, CA
90095 U. S. A.}
\email{greene@math.ucla.edu}

\address{(Kim) Department of Mathematics, Pohang University of Science and Technology, Pohang City 37673 South Korea.}\
\email{kimkt@postech.ac.kr}

\address{(Shcherbina) Department of Mathematics, University of Wuppertal, Wuppertal 42119 Germany}
\email{shcherbina@math.uni-wuppertal.de}


\maketitle


\begin{quote} \small
\textsc{Abstract.} 
Invariants for Riemann surfaces covered by the disc and for hyperbolic
manifolds in general involving minimizing the measure of the image over 
the homotopy and homology classes of closed curves and maps of the 
$k$-sphere into the manifold are investigated.  The invariants are 
monotonic under holomorphic mappings and strictly monotonic under 
certain circumstances. Applications to holomorphic maps of annular 
regions in $\mathbb{C}$ and tubular neighborhoods of compact totally 
real submanifolds in general in $\mathbb{C}^n$, $n \geq 2$, are given. 
The contractibility of a hyperbolic domain with contracting 
holomorphic mapping is explained.
\end{quote}

\tableofcontents

\section{Introduction}

It is an elegant and effective technique in Riemannian geometry to consider
the minimum-length curve in each nontrivial free homotopy class of closed 
curves in a compact Riemannian manifold.  Such a minimum-length curve 
always exists and is a smooth closed geodesic.  This idea is, for example, the
basic step in proving that a compact, orientable manifold of even dimension
with everywhere positive sectional curvature is simply connected (cf.\ 
\cite{Petersen}  p.\ 172, Theorem 26). 
If the Riemannian manifold is not compact, then a minimal length curve may 
not exist, but it remains of interest to consider the infimum of the lengths of
closed curves in the free homotopy class and also the infimum over all 
nontrivial free homotopy classes as well.

The purpose of this paper is to examine this general idea in the context of 
complex manifolds and also to consider the corresponding possibilities for 
the situation of maps of the $k$-sphere which are homotopically nontrivial, 
with length replaced by a suitable idea of $k$-dimensional measure.

For Riemann surfaces which are covered by the unit disc $\Delta$---that is, 
all Riemann surfaces except $\CC, \CC\setminus \{0\}, \CC \cup \{\infty\}$, 
and compact surfaces of genus 1---the situation becomes one 
of Riemannian metrics: If $\pi\colon \Delta \to M$ is a holomorphic 
covering map of the Riemann surface, then the Poincar{\' e} metric 
on the unit disc $\Delta$ ``pushes down'' to a smooth Riemannian (Hermitian) 
metric on $M$, i.e., there is a unique metric on $M$ such that $\pi$ is a local 
isometry.  This canonical push-down metric on such $M$'s has the property 
that, if $F\colon M_1 \to M_2$ is a holomorphic map, then 
$F$ is metric nonincreasing.  This property will
be exploited early in the paper to recover various classical results on maps of
annular regions in $\CC$ from the viewpoint of minimization of lengths of 
curves in free homotopy classes.

Extension of these ideas to higher dimensional complex manifolds and to 
$k$-homotopy classes, $k>1$, involves new features: The natural metric 
to consider is of course the ``hyperbolic metric'' introduced by S. Kobayashi, 
which defines a natural extension to all dimensions of the canonical 
metric construction for Riemann surfaces just discussed.  
But a new aspect arises: the Kobayashi metric
on a hyperbolic manifold (in the sense of Kobayashi) need not be a Riemannian
metric in dimension $\ge 2$.  It is, however, a Finsler metric 
with an infinitesimal form known as the Kobayashi-Royden metric 
\cite{Royden70}. But for such an infinitesimal metric, which is only upper 
semi-continuous and does not satisfy the triangle inequality in general, 
it is necessary to exercise care in defining
$k$-dimensional volume measure to the image of a $k$-sphere in the 
manifold.  We shall consider primarily the details of this matter only for some 
subsets of $\CC^n$, $n\ge 2$ (cf. Sections \ref{mono2} and 
\ref{general-tubular}), although in principle greater generality would 
be possible with other hypotheses.  This relevant
general idea is $k$-dimensional Hausdorff measures.  These ideas are applied
in the final sections to obtain results for holomorphic mappings of tubular 
neighborhoods of totally real $k$-spheres in $\CC^n$, analogous to the earlier 
results on maps of annular regions (cf.\ Section \ref{RiemSurf1}).  
It is also natural to consider the tubular neighborhoods of more 
general compact totally real submanifolds.  For such a general case, 
the degree of maps, a homology invariant is more appropriate and has 
been investigated.  

\bigskip

\begin{center}
\textbf{Some Notation}
\end{center}
\bigskip

\begin{itemize}
\item $F^\textrm{Kob}_U$ : the Kobayashi-Royden metric of an open
set $U$ in $\CC^n$ \\
\item $\mu^k_{X,d} (A)$ : the $k$-dimensional Hausdorff measure of the 
subset $A$ in the metric space $(X,d)$ \\
\item $\mu^k_\textrm{Euc} (B)$  : the $k$-dimensional Hausdorff measure 
of the subset $B$ in $\CC^n$ with respect to the Euclidean norm \\
\item $\mu^k_{\textrm{Kob}, U} (C)$ : the $k$-dimensional Hausdorff measure 
of the subset $C$ of an open set $U$ in $\CC^n$ with respect to the Kobayashi 
distance
\end{itemize}
\medskip

\section{The $\ell_1$-invariant}

Let $(X, \rho)$ be a metric space with the metric $\rho$. 
For a continuous curve $\alpha \colon [a,b] \to X$, and a
partition of the interval $[a,b]$ 
\[
P := \{ t_k \colon k=0, 1, \cdots, N, \textrm{ with } 
a=t_0 < t_1 < \cdots < t_N = b\}
\]
for some positive integer $N$, let
\[
s(\alpha, P) := \sum_{k=1}^N \rho(\alpha(t_{k-1}), \alpha(t_k)).
\]
If the set 
$\{s(\alpha, P)\colon P \textrm{ a partition of } [a,b]\}$ 
is bounded above, we say that $\alpha$ is \textit{rectifiable} and define 
the \textit{length} $L(\alpha)$ of $\alpha$ by 
\[
L(\alpha) = \sup \{s(\alpha, P) \colon P \textrm{ a partition of } [a,b]\}.
\]
As usual, reparametrizations of rectifiable curves are rectifiable and the 
length is independent of parametrization.

\begin{definition}
\textup{
A map $F\colon X_1 \to X_2$ from a metric space $(X_1, \rho_1)$ to
$(X_2, \rho_2)$ is called \textit{distance nonincreasing} if 
$\rho_2 (F(x), F(y)) \le \rho_1 (x, y)$ for all $x, y \in X_1$.
}
\end{definition}

Notice that, if a map $F$ is distance nonincreasing, then 
the $F$-images of rectifiable curves are rectifiable and the 
$F$-images have length $\le$ the length of the original curve, 
i.e., length $F(\gamma(t)) \le$ length $\gamma(t)$,
for all rectifiable curves $\gamma(t)$ in $X_1$.

\medskip
\begin{definition}[The $\ell_1$-invariant]
\textup{
Consider a metric space $(X,\rho)$ which is not simply connected.  Then 
the $\ell_1$-\textit{invariant} of the metric space $X$ is defined to be
\[
\ell_1 (X) := \inf \{L(\alpha) \colon \alpha ~\textrm{a
non-contractible rectifiable loop in } X\}, ~\inf\varnothing = +\infty.
\]
}
\end{definition}
\medskip

\begin{proposition} \label{ell-inv}
If $f\colon (X_1, \rho_1) \to (X_2, \rho_2)$ is a distance nonincreasing map, 
and if the induced homomorphism 
$f_* \colon \pi_1 (X_1) \to \pi_1 (X_2)$ is injective, then 
$\ell_1 (X_2) \le \ell_1 (X_1)$.
\end{proposition}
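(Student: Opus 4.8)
The plan is to reduce everything to a single non-contractible rectifiable loop in $X_1$ and push it forward by $f$, using the observation recorded right after the definition of a distance nonincreasing map, together with the injectivity of $f_*$. First I would dispose of the trivial case: if $X_1$ carries no non-contractible rectifiable loop, then $\ell_1(X_1) = \inf\varnothing = +\infty$ and the inequality is automatic.

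So suppose $\alpha\colon[a,b]\to X_1$ is a non-contractible rectifiable loop; it suffices to prove $\ell_1(X_2)\le L(\alpha)$, since taking the infimum over all such $\alpha$ then yields $\ell_1(X_2)\le\ell_1(X_1)$. Set $\beta := f\circ\alpha\colon[a,b]\to X_2$. Because $\alpha(a)=\alpha(b)$ we get $\beta(a)=\beta(b)$, so $\beta$ is a loop; and since $f$ is distance nonincreasing it is in particular continuous (indeed $1$-Lipschitz), so $\beta$ is continuous. By the remark following the definition of distance nonincreasing maps, $\beta$ is rectifiable and $L(\beta)\le L(\alpha)$.

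It remains to check that $\beta$ is non-contractible in $X_2$, and this is where the hypothesis on $f_*$ enters. Choosing the basepoint $p:=\alpha(a)$, the loop $\alpha$ represents a class $[\alpha]\in\pi_1(X_1,p)$, and $\alpha$ being non-contractible as a free loop is equivalent to $[\alpha]\ne e$ (within a given path component, a based loop that admits a free null-homotopy also admits one rel basepoint). Since $f_*$ is injective — and injectivity is insensitive to the choice of basepoint, via the canonical isomorphisms induced by a path — we conclude $[\beta]=f_*[\alpha]\ne e$ in $\pi_1(X_2,f(p))$, so $\beta$ is not freely null-homotopic. Hence $\beta$ is a non-contractible rectifiable loop in $X_2$ and $\ell_1(X_2)\le L(\beta)\le L(\alpha)$, which is what we wanted.

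The argument is short, and the only point requiring genuine care — the mild obstacle here — is the translation between free homotopy classes of loops, which is what the definition of $\ell_1$ refers to, and elements of $\pi_1$, which is what the injectivity of $f_*$ refers to; once that translation is in place, the length estimate $L(f\circ\alpha)\le L(\alpha)$ supplies everything else.
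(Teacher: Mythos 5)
Your proof is correct and follows essentially the same route as the paper: push a noncontractible rectifiable loop forward by $f$, use the $1$-Lipschitz property to bound the length of the image, and use the injectivity of $f_*$ to ensure the image loop is still noncontractible. You are somewhat more explicit than the paper about the translation between free homotopy classes and elements of $\pi_1$ (and you dispense with the intermediate per-class infimum $\ell_1([\alpha])$ that the paper introduces), but the underlying argument is the same.
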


\begin{proof}
For a loop $\alpha$ in $X_1$ denote by $[\alpha]$ the set 
of all loops that are free homotopic to $\alpha$  in $X_1$.  Set
\[
\ell_1([\alpha]) = \inf \{L(\beta) \colon \beta \in [\alpha]\}.
\]
Then 
\[
\ell_1 (X_1) = \inf \{\ell_1([\alpha]) \colon \alpha 
\textrm{ non-contractible in } X_1\}.
\]
Let $\epsilon > 0$.  Take a noncontractible loop $\beta$ in $X_1$ with
$\ell_1 ([\beta]) < \ell_1(X_1) + \epsilon$.  Then $f \circ \beta$ is 
noncontractible in $X_2$, since $f_*$ is injective.  Then 
$\ell_1([f\circ \beta]) \ge \ell_1 (X_2)$.  The distance nonincreasing property
of $f$ implies that 
\[
\ell_1 (X_2) \le \ell_1([f\circ \beta]) \le \ell_1([\beta]) < \ell_1(X_1) + \epsilon.
\]
Since this holds for any $\epsilon > 0$, the assertion follows immediately.
\end{proof}
\medskip

Distance nonincreasing/length nonincreasing maps arise naturally in
complex analysis.  The classical Schwarz Lemma is equivalent to the fact 
that a holomorphic function $f$ from the unit disc
$\Delta =\{z \in \CC \colon |z|<1\}$ into itself with $f(0)=0$ has the 
property that $d(0, f(z)) \le d(0,z)$, where $d=$ the Poincar{\' e} distance.  
Since the action by holomorphic isometries of $\Delta$ to itself relative 
to the Poincar{\' e} metric are transitive on $\Delta$, what is known as the 
Schwarz-Pick Lemma follows immediately:  
\textit{
If 
$f\colon \Delta \to \Delta$ is holomorphic, then 
$d(f(z_1), f(z_2)) \le d(z_1, z_2)$ for all $z_1, z_2 \in \Delta$, where
$d$ is the Poincar{\' e} distance.}

This can be extended to Riemann surfaces as follows:
If $M$ is a Riemann surface that is holomorphically covered by the 
unit disc $\Delta$. say $\pi\colon \Delta \to M$ is a holomorphic
covering map, then the covering transformation of the covering
$\pi$ are holomorphic isometries for the Poincar{\' e} metric.  It follows
that $M$ has a unique Riemannian (indeed Hermitian) metric for which 
$\pi$ is locally isometric.  Let us call this the \textit{canonical metric} 
for $M$.

If $M_1$ and $M_2$ are two such Riemann surfaces, with canonical 
(Riemannian) metrics $g_1$ and $g_2$ respectively, and if 
$F\colon M_1 \to M_2$ is a holomorphic map, then the pull back $F^* g_2$ 
to $M_1$ of the metric $g_2$, is less than or equal to $g_1$, i.e., 
\[
F^* g_2|_p \le g_1|_p
\]
for each $p \in M_1$, where 
\[
F^* g_2 |_p (v, w) = g_2|_{F(p)} (dF_p (v), dF_p (w)),
\]
for all $v, w$ in the real tangent space of $M_1$ at $p$, so $dF_p (v)$ and
$dF_p (w)$ are in the real tangent space of $M_2$ at $F(p)$.  

To see this distance nonincreasing property, note that if 
$\pi_1 \colon \Delta \to M_1$ and $\pi_2 \colon \Delta \to M_2$ are
holomorphic covering maps, then $F\colon M_1 \to M_2$ can be 
lifted to a holomorphic map $\hat F \colon \Delta \to \Delta$ in such
a way that the diagram
\[
\begin{CD} 
\Delta @>{\hat F}>> \Delta \\
@V{\pi_1}VV      @VV{\pi_2}V \\
M_1 @>>F>  M_2 
\end{CD}
\]
commutes. The map $\hat F$ is nonincreasing for the Poincar{\' e} metric
by the Schwarz-Pick Lemma. (cf. \cite{KimLee})

It follows, since $\pi_1$ and $\pi_2$ are (local) isometries, in the sense 
indicated of the inequality on the pull-back metric, that 
$F\colon M_1 \to M_2$ is length nonincreasing/distance nonincreasing 
for the canonical metrics of $M_1$ and of $M_2$, respectively.

\section{Holomorphic maps of Riemann surfaces of general type
}
\label{RiemSurf1}

In the case that $M = \{z \in \CC \colon \frac1{\sqrt{R}} < |z| < \sqrt{R} \}$,
$R>1$, which is covered by the unit disc $\Delta$, the canonical metric on 
$M$ is straightforward to compute.  
The function $F(z) = e^{iz}$ defines a covering map of the ``strip''
$\{x+iy \colon - \ln R < y < \ln R\}$ onto $M$, while the strip is biholomorphic
to the upper half plane via the composition of a linear map, exponentiation,
and a linear fractional transformation.  Tracing through gives the formula
\[
\frac{\big(\pi / (2 \ln R)^2\big)}{r^2 \cos^2 \big(\pi \ln r / (2\ln R)\big)} dr^2 +
\frac{\big(\pi/(2 \ln R)\big)^2}{\cos^2 \big(\pi \ln r / (2\ln R)\big)} d\theta^2
\]
for the canonical metric on $M$ in $(r,\theta)$ polar coordinates. (Cf.\
\cite{GKK}, p.\ 39.)

The free homotopy classes of closed curves in $M$ are characterized by
winding number (around $0$) and are nontrivial for all winding numbers 
except $0$.  Since winding number is $\frac1{2\pi}$ times the total
change in polar angle $\theta$ around the curve, it follows easily
that, for any rectifiable curve $\gamma$ in $M$, the length of the curve
in the canonical metric is $\ge \pi^2/\ln R$.  Thus the $\ell_1$-invariant
of $M$ in its canonical metric is $\ge \pi^2/\ln R$ and indeed
\begin{equation}
\ell_1 (M) = \frac{\pi^2}{\ln R},  \label{M-for-Ann}
\end{equation}
with the infimum realized by once-around the curve 
$r=1$ ($\theta$ goes from $0$ to $2\pi$), either clockwise or 
counterclockwise.  (Throughout, we are using the Poincar{\' e} metric on 
the unit disc $\Delta$ to be $4 (1-z\bar z)^{-2} dz \ d\bar z $ so that
the curvature $\equiv -1$.)
\medskip

Since $\{z \in \CC \colon A < |z| < B\}$ is linearly biholomorphic to 
$\{ z \colon \sqrt{A/B} < |z| < \sqrt{B/A} \}$, the $\ell_1$-invariant of 
$\{z \in \CC \colon A < |z| < B\}$ is equal to $\pi^2/\ln (B/A)$. The 
$\ell_1$-invariant is preserved by biholomorphic mapping, so the
classical result follows:

\begin{theorem}[Hadamard] \label{AnnThm0}
The region $\{z \colon A_1 < |z| < B_1\}$ is biholomorphic to 
$\{z \colon A_2 < |z| < B_2\}$ if, and only if, $B_1/A_1 = B_2/A_2$.
\end{theorem}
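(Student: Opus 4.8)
The plan is to derive Hadamard's theorem directly from the computation of the $\ell_1$-invariant carried out just above, using the fact that it is a biholomorphic invariant. The two annular regions in question are each biholomorphic, via a real scaling $z \mapsto cz$, to a symmetric annulus of the form $\{w : 1/\sqrt{R} < |w| < \sqrt{R}\}$: concretely $\{z : A_i < |z| < B_i\}$ maps to such a region with $R = B_i/A_i$ under $z \mapsto z/\sqrt{A_iB_i}$. Since each such map is a biholomorphism, and since $\ell_1$ is a biholomorphic invariant (it is defined purely in terms of the canonical metric, which is itself a biholomorphic invariant, or one may invoke Proposition~\ref{ell-inv} applied to the biholomorphism and its inverse), we get $\ell_1(\{z : A_i < |z| < B_i\}) = \ell_1(\{w : 1/\sqrt{R_i} < |w| < \sqrt{R_i}\}) = \pi^2/\ln R_i = \pi^2/\ln(B_i/A_i)$ by formula~\eqref{M-for-Ann}.

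With this formula in hand, both directions of the equivalence are immediate. If $B_1/A_1 = B_2/A_2$, then composing the scaling $\{z : A_1 < |z| < B_1\} \to \{w : 1/\sqrt{R} < |w| < \sqrt{R}\}$ with the inverse scaling to $\{z : A_2 < |z| < B_2\}$ (where $R = B_1/A_1 = B_2/A_2$) exhibits an explicit biholomorphism, so that direction needs no invariant at all. Conversely, if the two annuli are biholomorphic, then their $\ell_1$-invariants agree, so $\pi^2/\ln(B_1/A_1) = \pi^2/\ln(B_2/A_2)$, and since $t \mapsto \pi^2/\ln t$ is injective on $(1,\infty)$ this forces $B_1/A_1 = B_2/A_2$.

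I would organize the write-up as: first reduce to symmetric annuli via scaling; then cite~\eqref{M-for-Ann} to compute $\ell_1$ of a general annulus; then invoke biholomorphic invariance of $\ell_1$ (noting it follows from Proposition~\ref{ell-inv} applied in both directions to a biholomorphism, since a biholomorphism and its inverse are both distance nonincreasing and induce isomorphisms on $\pi_1$, hence equalities of $\ell_1$); and finally read off both implications. The only genuine content has already been done in the paragraphs preceding the theorem statement — namely the explicit formula for the canonical metric on the symmetric annulus and the resulting evaluation $\ell_1 = \pi^2/\ln R$ — so the proof itself is essentially a two-line deduction. The main (very minor) obstacle is simply being careful that the invariant one uses really is biholomorphically invariant and that the function $\ln$ is strictly monotone, so that the equality of invariants can be promoted to the equality $B_1/A_1 = B_2/A_2$ rather than merely an inequality; there is no analytic difficulty remaining.
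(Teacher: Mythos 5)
Your argument is exactly the one the paper intends: scale each annulus to symmetric form, read off $\ell_1 = \pi^2/\ln(B/A)$ from formula~\eqref{M-for-Ann}, and use biholomorphic invariance of $\ell_1$ (with the strict monotonicity of $t\mapsto\pi^2/\ln t$) to get the forward implication, the converse being the explicit linear map. This matches the paper's discussion in the paragraph immediately preceding the theorem statement.
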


This result, originally proved by Hadamard (Cf. \cite{Littlewood}), 
is usually proved by non-metric methods, e.g., Schwarz Reflection, 
\cite{GK, Ahlfors, Rudin} et al.
\medskip

The nonincreasing property of the $\ell_1$-invariant gives an extension of this
result:

\begin{theorem} \label{AnnThm}
If $0<A_1<B_1$, $0<A_2<B_2$ and $B_1/A_1 > B_2/A_2$, then 
every holomorphic mapping $f\colon \{z \colon A_1 < |z| < B_1\}
\to \{z \colon A_2 < |z| < B_2\}$ is homotopically trivial, that is, it is 
homotopic to a constant map.
\end{theorem}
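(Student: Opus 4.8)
The plan is to argue by contradiction using the $\ell_1$-invariant and Proposition~\ref{ell-inv}. Suppose $f\colon \{z \colon A_1 < |z| < B_1\} \to \{z \colon A_2 < |z| < B_2\}$ is holomorphic but \emph{not} homotopically trivial. Write $M_i$ for the two annuli equipped with their canonical metrics. Since each $M_i$ is an annulus, $\pi_1(M_i) \cong \mathbb{Z}$, and the homotopy class of a loop is detected by its winding number around $0$. The induced map $f_*\colon \pi_1(M_1) \to \pi_1(M_2)$ is multiplication by some integer $m$ (the degree of $f$ on the circle factor); $f$ is homotopically nontrivial precisely when $m \neq 0$, and in that case $f_*$ is injective. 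As established at the end of Section~2, a holomorphic map between Riemann surfaces of general type is distance nonincreasing for the canonical metrics, so $f$ is distance nonincreasing. Proposition~\ref{ell-inv} then gives $\ell_1(M_2) \le \ell_1(M_1)$.

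Next I would bring in the explicit computation from~\eqref{M-for-Ann} and the biholomorphism normalization preceding Theorem~\ref{AnnThm0}: $\ell_1(M_i) = \pi^2/\ln(B_i/A_i)$. The hypothesis $B_1/A_1 > B_2/A_2$ means $\ln(B_1/A_1) > \ln(B_2/A_2) > 0$, hence $\pi^2/\ln(B_1/A_1) < \pi^2/\ln(B_2/A_2)$, i.e. $\ell_1(M_1) < \ell_1(M_2)$. This directly contradicts $\ell_1(M_2) \le \ell_1(M_1)$ from the previous paragraph, so no homotopically nontrivial $f$ can exist, which is exactly the assertion.

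The one point needing a little care — the main obstacle, such as it is — is the bookkeeping on $f_*$ and on the case $m \neq 0$. One must observe that a homotopically nontrivial continuous map of an annulus to an annulus induces multiplication by a nonzero integer on $\pi_1 \cong \mathbb{Z}$, and that multiplication by a nonzero integer on $\mathbb{Z}$ is injective; this is where the injectivity hypothesis of Proposition~\ref{ell-inv} is verified. (It is worth noting that we do \emph{not} need $f_*$ surjective, only injective, which is automatic here once $m \neq 0$.) A secondary subtlety is confirming that $f$ genuinely falls under the distance-nonincreasing framework of Section~2: both annuli are Riemann surfaces covered by $\Delta$ since $B_i/A_i > 1$ keeps them from being $\mathbb{C}$, $\mathbb{C}\setminus\{0\}$, $\widehat{\mathbb{C}}$, or a genus-$1$ surface, so each carries its canonical metric and the lifting-plus-Schwarz--Pick argument applies verbatim. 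With these two observations in place the contradiction is immediate and the proof is complete.
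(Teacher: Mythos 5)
Your proof is correct and follows essentially the same route as the paper: reduce to the $\ell_1$-invariant inequality via Proposition~\ref{ell-inv} and contradict it using the explicit formula $\ell_1 = \pi^2/\ln(B/A)$. The only difference is cosmetic — you spell out that a homotopically nontrivial self-map of an annulus induces multiplication by a nonzero integer on $\pi_1 \cong \mathbb{Z}$, hence is automatically injective on $\pi_1$, a point the paper leaves implicit behind the phrase ``immediate consequence of covering space theory.''
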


\noindent
\textit{Proof}. The map $f$ is homotopically trivial if and only if the
$f$-image of the curve $\gamma (t) = \frac12(A_1 + B_1) (\cos t, \sin t),
~(t \in [0, 2\pi])$ is homotopic to a constant curve: this is an immediate
consequence of covering space theory.  If $f$ is not homotopically
trivial, then the free homotopy class of the $f$-image indicated is 
nontrivial.  Then Proposition \ref{ell-inv} gives that 
\[
\ell_1 (\{z \colon A_2 < |z| < B_2\}) \le \ell_1 (\{z \colon A_1 < |z| < B_1\}),
\]
and the result follows from the formula \eqref{M-for-Ann}
for the $\ell_1$-invariant.
\hfill
$\Box$
\bigskip

This Theorem \ref{AnnThm} implies in particular the historical result: 

\begin{theorem}[de Possel \cite{dePossel}] 
\label{H-2}
There is a 1-1 holomorphic mapping $g\colon \{z \colon 0<A_1 < |z| < B_1\}
\to \{z \colon 0< A_2 < |z| < B_2\}$ whose image separates the boundary 
components of $\{z \colon A_2 < |z| < B_2\}$ if and only if
$B_1/A_1 \le B_2/A_2$.
\end{theorem}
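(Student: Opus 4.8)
The plan is to prove the two implications separately: the ``if'' direction by exhibiting an explicit map, and the ``only if'' direction by reducing it to Theorem~\ref{AnnThm}.

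For the ``if'' direction, assume $B_1/A_1\le B_2/A_2$. If equality holds I would take $g$ to be the linear rescaling biholomorphism of $\{A_1<|z|<B_1\}$ onto all of $\{A_2<|z|<B_2\}$; its image is the whole target annulus, whose complement in the closed annulus $\{A_2\le|z|\le B_2\}$ is the disjoint union of the two boundary circles, so the image separates them. If the inequality is strict, I would pick radii $A_2<a<b<B_2$ with $b/a=B_1/A_1$ (possible exactly because the strict inequality is equivalent to $A_2<B_2A_1/B_1$) and take $g$ to be the rescaling biholomorphism of $\{A_1<|z|<B_1\}$ onto the sub-annulus $\{a<|z|<b\}$; then the complement of the image in $\{A_2\le|z|\le B_2\}$ is $\{A_2\le|z|\le a\}\sqcup\{b\le|z|\le B_2\}$, which again separates the two boundary circles. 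In either case $g$ is one-to-one and its image separates the boundary components.

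For the ``only if'' direction, suppose such a $g$ exists; I must deduce $B_1/A_1\le B_2/A_2$. Since $g$ is one-to-one and holomorphic, the open mapping theorem makes $g$ a biholomorphism onto $S:=g(\{A_1<|z|<B_1\})$, so $S$ is a doubly connected planar domain and $(\CC\cup\{\infty\})\setminus S$ has exactly two connected components, say $E_0$ and $E_\infty$, with $\infty\in E_\infty$ and $E_0$ compact. Because $\{|z|\le A_2\}$ is connected and disjoint from $S$ it lies in a single component, and similarly for $\{|z|\ge B_2\}\cup\{\infty\}$; the hypothesis that the image of $g$ separates the two boundary components of $\{A_2<|z|<B_2\}$ says precisely that these two connected sets fall into the two different components $E_0$ and $E_\infty$. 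In particular $0\in E_0$. Let $\gamma_0$ be a loop generating $\pi_1(\{A_1<|z|<B_1\})\cong\mathbb{Z}$. Then $g\circ\gamma_0$ is a loop generating $\pi_1(S)\cong\mathbb{Z}$, and a generator of the fundamental group of a doubly connected domain has winding number $\pm1$ about every point of the bounded complementary component. Hence $g\circ\gamma_0$ winds $\pm1$ time around $0$, so the induced homomorphism $g_*\colon\pi_1(\{A_1<|z|<B_1\})\to\pi_1(\{A_2<|z|<B_2\})$ is nonzero; that is, $g$ is not homotopically trivial. By the contrapositive of Theorem~\ref{AnnThm}, the inequality $B_1/A_1>B_2/A_2$ is impossible, which gives $B_1/A_1\le B_2/A_2$.

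The step I expect to need the most care is the one in the last paragraph that turns the set-theoretic ``separating'' hypothesis into the homotopy-theoretic statement that $g$ is homotopically nontrivial; the bridge is the classical picture of a conformal annulus lying in $\CC\cup\{\infty\}$ — its complement is a pair of continua, and an embedded core curve generates the fundamental group while winding exactly once around the bounded one. One should first fix the precise meaning of ``separates the boundary components''; any of the natural formulations (distinct components of $(\CC\cup\{\infty\})\setminus S$, or no path in the complement of $S$ joining the inner hole to $\infty$, the latter reconciled with the former by truncating a hypothetical connecting path at its last visit to $\{|z|\le A_2\}$ and its first visit to $\{|z|\ge B_2\}$) leads to the same condition, namely that $0$ lies in the bounded complementary component of $S$. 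The remaining ingredients — the rescalings in the ``if'' direction and the appeal to Theorem~\ref{AnnThm} — are routine.
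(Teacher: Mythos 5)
Your proof is correct and follows essentially the same route the paper indicates: a linear rescaling for the ``if'' direction, and for ``only if'' the observation that the separating hypothesis forces $g$ to be nontrivial on $\pi_1$ so that the contrapositive of Theorem~\ref{AnnThm} applies. You simply flesh out the topological bridge (winding number $\pm1$ around $0$ of the image of a core curve) that the paper leaves as a one-line remark.
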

\medskip

Notice that the condition on $g$ implies that $g$ is injective on homotopy; 
For instance the $g$-image of the curve $\gamma$ in the preceding proof 
will be a closed curve in $\{z \colon A_2 < |z| < B_2\}$ that goes around 
the origin exactly once.  Notice also that the ``if'' part of this theorem is 
obviously true by a complex linear map.  (Cf.\ \cite{Goluzin, Grotzsch} for the 
original proof.)
\bigskip

A conclusion by the same argument holds for multiply connected domains and
Riemann surfaces as well.

\begin{theorem} 
Let $X$ and $Y$ be Riemann surfaces holomorphically covered by the unit disc 
with $\ell_1 (X) < \ell_1 (Y)$.  If $f\colon X \to Y$ is a holomorphic map, then
it cannot be injective on homotopy.  In particular, $f$ cannot be a homotopy 
equivalence.
\end{theorem}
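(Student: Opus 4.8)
The plan is to mimic exactly the argument used for Theorem~\ref{AnnThm}, replacing the explicit annulus computation by the abstract definition of the $\ell_1$-invariant. First I would recall that, as explained in the paragraphs preceding this theorem, a holomorphic map $f\colon X \to Y$ between Riemann surfaces holomorphically covered by $\Delta$ is distance nonincreasing for the canonical (push-down Poincar\'e) metrics on $X$ and $Y$. This is the content of the commuting-diagram lifting argument together with the Schwarz--Pick Lemma: the lift $\hat f\colon \Delta \to \Delta$ is Poincar\'e nonincreasing, and since the covering maps $\pi_X$ and $\pi_Y$ are local isometries, $f$ is itself distance nonincreasing in the canonical metrics.

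Next I would argue by contradiction. Suppose $f$ were injective on homotopy, i.e.\ that the induced homomorphism $f_*\colon \pi_1(X)\to\pi_1(Y)$ is injective. Then Proposition~\ref{ell-inv} applies directly: a distance nonincreasing map with injective $f_*$ satisfies $\ell_1(Y)\le\ell_1(X)$. But this contradicts the hypothesis $\ell_1(X) < \ell_1(Y)$. Hence $f_*$ cannot be injective, which is precisely the assertion that $f$ is not injective on homotopy. Finally, since a homotopy equivalence induces an isomorphism (in particular an injection) on $\pi_1$, the same contradiction shows $f$ cannot be a homotopy equivalence; this last sentence is an immediate corollary requiring no extra work.

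There is essentially no hard part: the theorem is a direct packaging of Proposition~\ref{ell-inv} with the distance nonincreasing property of holomorphic maps in the canonical metric. The only point that warrants a word of care is the standard fact that $\pi_1$ of a Riemann surface covered by $\Delta$ is nontrivial (so that $\ell_1$ is actually defined and the statement is not vacuous), but this is automatic here since otherwise the surface would be $\Delta$ itself and $\ell_1(\Delta)=+\infty$ by the convention $\inf\varnothing = +\infty$, which is consistent with all inequalities above. If one wants to be scrupulous, I would note that the hypothesis $\ell_1(X) < \ell_1(Y)$ forces $\ell_1(X)$ to be finite, so $X$ genuinely carries a noncontractible rectifiable loop, and the argument proceeds without any degenerate cases.
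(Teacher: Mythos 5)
Your proposal is correct and is exactly the argument the paper has in mind: the paper omits a written proof, saying only that the result follows ``by the same argument'' as Theorem~\ref{AnnThm}, and your write-up is precisely that argument — distance nonincreasing property of holomorphic maps in the canonical metrics plus Proposition~\ref{ell-inv}, yielding $\ell_1(Y)\le\ell_1(X)$ and a contradiction with $\ell_1(X)<\ell_1(Y)$.
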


\section{Non-Riemannian Kobayashi hyperbolic case}

The only properties of the ``canonical metric'' on Riemann surfaces
covered by the disc that were crucial here were that holomorphic maps
were distance nonincreasing and that the canonical metric was locally
comparable (in both directions) with any metric derived from local
coordinates.  In this context, it is clear that the whole viewpoint has an 
immediate extension to complex manifolds which are ``hyperbolic'' in
the sense introduced by S. Kobayashi.  In this section, we consider only
complex manifolds (or complex spaces) that are hyperbolic in the sense
of Kobayashi, namely, those for which the Kobayashi pseudodistance is
an actual distance function.
\cite{Kobayashi70, Kobayashi98}.

In this setting, one can define again the $\ell_1$-invariant $\ell_1(M)$ 
of a hyperbolic 
manifold to be the infimum of the lengths of closed curves that are not 
freely homotopic to 0 (assuming $M$ is not simply connected). Then following
the pattern of before one gets the results:

\begin{theorem} 
If $f\colon M_1 \to M_2$ is a holomorphic map of hyperbolic manifolds, and if
$f$ is injective on free homotopy classes of closed curves (in the sense that if 
$\gamma(t)$ is a closed curve in $M_1$ not freely homotopic to a constant,
then $f(\gamma(t))$ is not freely homotopic to a constant), then
\[
\ell_1 (M_2) \le \ell_1 (M_1).
\]
\end{theorem}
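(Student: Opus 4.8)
The plan is to mimic the proof of Proposition~\ref{ell-inv} essentially verbatim, making only two substitutions: the canonical/Poincar\'e distance is replaced by the Kobayashi distance, and the hypothesis ``$f_*$ injective on $\pi_1$'' is replaced by the given hypothesis that $f$ is injective on free homotopy classes of closed curves. The single structural input required is that, on a Kobayashi hyperbolic manifold $M$, the Kobayashi pseudodistance $d_M$ is a genuine distance inducing the manifold topology, so that ``rectifiable loop'', ``length'', and ``non-contractible'' have their intended meaning and the $\ell_1$-invariant is defined relative to the metric space $(M,d_M)$ exactly as in the definition of $L(\alpha)$ and $\ell_1$ above.

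First I would invoke the fundamental distance-decreasing property of the Kobayashi pseudodistance: any holomorphic map $f\colon M_1 \to M_2$ satisfies $d_{M_2}(f(p),f(q)) \le d_{M_1}(p,q)$ for all $p,q\in M_1$ (cf.\ \cite{Kobayashi70, Kobayashi98}). In the terminology of Section~2 this says precisely that $f\colon (M_1,d_{M_1}) \to (M_2,d_{M_2})$ is distance nonincreasing; hence, by the remark following that definition, $f$ carries rectifiable curves to rectifiable curves without increasing length, so $L(f\circ\beta')\le L(\beta')$ for every rectifiable loop $\beta'$ in $M_1$.

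Next, for a loop $\alpha$ in $M_i$ write $[\alpha]$ for its free homotopy class and set $\ell_1([\alpha]) = \inf\{L(\beta)\colon \beta\in[\alpha]\}$, so that $\ell_1(M_i) = \inf\{\ell_1([\alpha])\colon \alpha \text{ non-contractible in } M_i\}$, with the infimum of the empty family understood to be $+\infty$ (this disposes of the trivial case in which $M_1$ is simply connected). Given $\epsilon>0$, choose a non-contractible rectifiable loop $\beta$ in $M_1$ with $\ell_1([\beta]) < \ell_1(M_1)+\epsilon$. By hypothesis $f\circ\beta$ is not freely homotopic to a constant in $M_2$, so $\ell_1(M_2) \le \ell_1([f\circ\beta])$. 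Moreover every $\beta'\in[\beta]$ satisfies $f\circ\beta'\in[f\circ\beta]$ (apply $f$ to a free homotopy), and $L(f\circ\beta')\le L(\beta')$ by the previous step; taking infima gives $\ell_1([f\circ\beta]) \le \ell_1([\beta])$. Chaining these inequalities,
\[
\ell_1(M_2) \le \ell_1([f\circ\beta]) \le \ell_1([\beta]) < \ell_1(M_1)+\epsilon,
\]
and letting $\epsilon\downarrow 0$ yields the assertion.

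I do not expect a serious obstacle here: once the Kobayashi distance-decreasing property is on the table, the argument is bookkeeping. The only point needing a word of care is the passage from pseudodistance to the length functional — one must use hyperbolicity to know that $d_M$ is a true metric compatible with the topology of $M$, so that a non-contractible loop has positive length and ``free homotopy class'' and ``length'' are the notions of Section~2. With that understood, the proof is literally Proposition~\ref{ell-inv} with ``injective on $\pi_1$'' weakened to ``injective on free homotopy classes,'' which is all that the earlier argument ever used.
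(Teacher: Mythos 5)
Your proof is correct and is exactly the argument the paper intends: the paper states this theorem with only the remark ``following the pattern of before,'' meaning the reader should replay Proposition~\ref{ell-inv} with the Kobayashi distance in place of the Poincar\'e distance and with the ``injective on free homotopy classes'' hypothesis substituted for the ``$f_*$ injective'' hypothesis, which is precisely what you do.
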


Thus the results discussed in the concrete instances of Riemann surfaces 
covered by the unit disc and of annular regions in $\CC$ in particular, 
can be extended to
far more general settings. The idea of the $\ell_1$-invariant via free homotopy
classes of closed curves can also be extended to higher dimensional homotopy
classes of maps of the $k$-sphere into complex hyperbolic manifolds, and to some
extent, into general length spaces.  These methods will be explored in subsequent 
sections.

\section{Holomorphic mappings of tubular domains}

The previous discussion of annular regions in $\CC$ has a straightforward
extension to \textit{tubular domains} in $\CC^n$, $n > 1$. For this, define, for $0 < r< 1$,
\[
T^n (r) = \{ \vec z = (z_1, \ldots, z_n) \in \CC^n \colon
|z_1|=1, \|\vec z - (z_1, 0, \ldots, 0)\| < r \},
\] 
where  $\|\cdot\|$ is the Euclidean norm.
Set $A(r_1,r_2) = \{ z \in \CC \colon r_1 < |z| < r_2 \}$ where
$0<r_1<r_2$.  The projection map $P_n (z_1, \ldots, z_n) = z_1$ takes
$T^n (r)$ onto $A(1-r, 1+r)$ and there is a natural injection 
$J_n (z) = (z,0,\ldots, 0)$ mapping $A(1-r,1+r)$ into $T^n (r)$.  These maps are
homotopy equivalences: $J_n$ and $P_n$ are homotopy inverses to each
other.  This observation together with Theorem \ref{AnnThm} gives rise to
a comparison result on these \textit{tubular domains} (meaning tubular 
neighborhoods of a circle, where the dimensions need not be equal):

\begin{theorem} 
If $f\colon T^n (r) \to T^m (s)$ with $r, s$ between $0$ and $1$, is
holomorphic and if $s<r$, then $f$ is homotopic to a constant map.
\end{theorem}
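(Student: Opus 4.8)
The plan is to reduce the assertion to the already–established annular case, Theorem~\ref{AnnThm}, using the homotopy equivalences $J_n,P_n$ and $J_m,P_m$ introduced just above the statement. First I would form the composite
\[
g := P_m \circ f \circ J_n \colon A(1-r,1+r) \longrightarrow A(1-s,1+s),
\]
which is holomorphic: $J_n(z)=(z,0,\ldots,0)$ is a holomorphic embedding, $P_m(z_1,\ldots,z_m)=z_1$ is a holomorphic projection, and $f$ is holomorphic by hypothesis. Note that $P_m$ indeed maps $T^m(s)$ into $A(1-s,1+s)$ and $J_n$ maps $A(1-r,1+r)$ into $T^n(r)$, so the composite is well defined.

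Next I would observe that the function $t \mapsto (1+t)/(1-t)$ is strictly increasing on $(0,1)$, so the hypothesis $s<r$ yields $(1+s)/(1-s) < (1+r)/(1-r)$; that is, the modulus of the target annulus $A(1-s,1+s)$ is strictly smaller than that of the source annulus $A(1-r,1+r)$. Hence Theorem~\ref{AnnThm} applies, with $A_1=1-r$, $B_1=1+r$, $A_2=1-s$, $B_2=1+s$, and shows that $g$ is homotopic to a constant map.

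Finally I would transport this conclusion back to $f$. Since $J_n\circ P_n \simeq \mathrm{id}_{T^n(r)}$ and $J_m\circ P_m \simeq \mathrm{id}_{T^m(s)}$, one has
\[
f \simeq (J_m\circ P_m)\circ f\circ(J_n\circ P_n) = J_m\circ g\circ P_n .
\]
Because $g$ is null-homotopic, so is $J_m\circ g\circ P_n$ (pre- and post-composing a null-homotopic map with continuous maps preserves null-homotopy), and therefore $f$ is homotopic to a constant.

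The argument is essentially bookkeeping, and I do not anticipate a real obstacle; the one point deserving a line of care is that ``homotopic to a constant'' is preserved under the homotopy equivalences in play. If one prefers to make this fully explicit rather than appealing to the homotopy-equivalence property stated above, one can argue on $\pi_1$: since $T^n(r)$ deformation retracts onto its core circle, $\pi_1(T^n(r))\cong\mathbb{Z}$, and a continuous self-indexed map out of $T^n(r)$ is null-homotopic exactly when it induces the zero map on $\pi_1$; as $(P_n)_\ast$ and $(J_n)_\ast$ (and likewise for $m$) are isomorphisms, $f_\ast=0$ if and only if $g_\ast=(P_m)_\ast f_\ast (J_n)_\ast=0$, which is precisely what Theorem~\ref{AnnThm} delivers for $g$.
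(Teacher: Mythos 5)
Your proposal is correct and follows essentially the same route as the paper: reduce to the annular case $g = P_m \circ f \circ J_n \colon A(1-r,1+r) \to A(1-s,1+s)$, apply Theorem~\ref{AnnThm}, and then transfer the null-homotopy back to $f$ via the homotopy equivalences $J_n, P_n, J_m, P_m$. You are a bit more explicit than the paper at two points — verifying that $s<r$ gives $(1+r)/(1-r) > (1+s)/(1-s)$ before invoking Theorem~\ref{AnnThm}, and writing out $f \simeq J_m \circ g \circ P_n$ to transfer null-homotopy — but these are the same bookkeeping steps the paper leaves implicit (the paper phrases the transfer using covering space theory applied to the image of a core circle rather than the homotopy $J_m\circ P_m \simeq \mathrm{id}$, but the content is identical).
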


\noindent
\textit{Proof}. The map $f$ is homotopic to a constant map if and only if the 
$f$-image of the closed curve
$\Gamma(t) = (e^{it}, 0, \ldots, 0), ~t \in [0,2\pi]$  in $T^n (r)$ is homotopic
to a constant curve in $T^m (s)$. This is again by the standard covering space
theory.  But, this happens if and only if the holomorphic map 
$F\colon A(1-r,1+r) \to A(1-s,1+s)$  defined by $F := P_m \circ f \circ J_n$
is homotopic to a constant.  This last follows from the homotopy equivalences
already noted.  Now Theorem \ref{AnnThm} implies that, if $s<r$, this map
$F$ is, and consequently $f$ is also, homotopic to a constant.  
\hfill
$\Box$
\medskip

\section{Higher homotopy invariants} 
\label{Higher}

It is natural to consider extending the previous introduced ideas about 
curve lengths in free homotopy classes of closed curves to higher dimensional
homotopy classes. In particular, if $(X, d)$ is a metric space (which we
assume arc-wise connected for convenience), then consider
continuous maps $f\colon S^k \to X$, $k>1$, 
$S^k = \{ (x_0, \cdots, x_k) \in \RR^{k+1} \colon x_0^2 + \ldots + x_k^2 = 1\}$
and define two such $f_0, f_1 \colon S^k \to X$ to be free-homotopic if
there is a continuous function $H\colon S^k \times [0,1] \to X$ such that,
for all $p$, $H (p,0) = f_0 (p)$ and $H (p,1) = f_1 (p)$.
If one has a situation where, for suitably restricted $f$, one can define the 
measure of (the image of) such $f\colon S^k \to X$, then one could imitate
the definition of the $\ell_1$-invariant corresponding to the $k=1$ case.  But
certain difficulties arise: One needs an idea of such a measure and one would
want to know that each free homotopy class of continuous maps
$S^k \to X$ would contain at least one map $f$ with the associated measure
of the image of $f$ being finite.  This would correspond to rectifiable 
curves and their lengths, and free-homotopy classes of curves, as in
earlier sections.

In the case where $(X, d)$ is a Riemannian manifold and $d$ its Riemannian
metric distance, the right idea of measure is, for smooth maps 
$f\colon S^k \to X$, the measure of $f|_A$, $A \subset S^k$, $A=$ the set of
points of $S^k$ where the differential $df$ has rank $k$, and the measure
of $f|_A$ is the usual Riemann-metric induced measure on $k$-dimensional
submanifolds. This measure is 
\[
\mu(f) = \int_{S^k} |\textsl{volume form}|
\]
where the $|\textsl{volume form}|$ on $(v_1, \ldots, v_k)$, where 
$v_1, \ldots, v_k \in T_p S^k$, is equal to the absolute value of 
the $k$-dimensional Riemannian volume at $f(p)$ of 
$f_* (v_1)\wedge \cdots \wedge f_* (v_k)$.  
It is easy to show (and well-known) that every free homotopy class of 
continuous maps $S^k \to X$ in this case ($X$ is a Riemannian manifold)
contains a smooth (not necessarily everywhere nonsingular) map:
If $f$ is a continuous map in the free homotopy class then a sufficiently good
smooth approximation of $f$ will be in the same free homotopy class 
(by deformation along minimal geodesics).  
So every free homotopy class contains
finite $k$-measure maps and hence one can define the infimum of measures
over finite-measure (smooth, e.g.) maps in the class and also an $\ell_k$ 
invariant (= infimum over all nontrivial free homotopy classes).
\medskip

In the case of more general metric spaces, it is not in general clear that 
any useful concept of $k$-dimensional measure exists. 
But in the case of metrics which are locally equivalent to Riemannian 
metrics, the familiar general idea of Hausdorff $k$-dimensional measure, 
can be used.  

\begin{definition}[Cf. \cite{Kobayashi98}, p.\ 343]
\textup{
Given a metric space $(X, d)$, for a nonnegative real number $k$ and a
subset $A$, the \textit{$k$-dimensional Hausdorff measure} 
$\mu^k_{X, d} (A)$ is defined as
\[
\mu^k_{X,d} (A) = \sup_{\epsilon > 0} \inf \Big\{ \sum_{i=1}^\infty \big(\delta (A_i)\big)^k \colon A = \bigcup_{i=1}^\infty A_i, \delta (A_i)<\epsilon\Big\},
\]
where $\delta (A_i)$ is the diameter of $A_i$ defined by
\[
\delta (A_i) = \sup_{p, q \in A_i} d(p, q).
\]
}
\end{definition}
\medskip

For a smooth map $f\colon S^k \to M$ of the $k$-sphere $S^k$ into a 
Riemannian manifold $M$, this $\mu^k_M (f(S^k))$ gives essentially the same 
concept of the $k$-dimensional measure as the Riemannian measure 
already defined.  But this new notion of measure has the advantage that smoothness is not involved.
\medskip

Rather than exploring these matters further in generality, we restrict our 
attention now to the specific situation of the Kobayashi metric for a bounded 
connected open set $U$ in $\CC^n$.  The open set $U$ has its Kobayashi 
metric in the infinitesimal form, the Kobayashi-Royden metric, as it is 
usually defined by 
\[
F^\textrm{Kob}_U (p,v) = \inf \Big\{r>0 \colon f \in \cO(\Delta, U), f(0)=p, df(0) = \frac{v}r \Big\}
\]
where $\Delta = \{z \in \CC\colon |z|<1\}$ and where $\cO(\Delta, U)$ 
denotes the set of holomorphic maps from $\Delta$ to $U$.  As is 
well-known \cite{Royden71, Royden70}, the Kobayashi distance from 
$p$ to $q$ for $p,q \in U$ is
the infimum of the length of the $C^1$ curves from $p$ to $q$ in $U$ where
the length is taken to be the integral $\int_\gamma F^{Kob}_U (z, dz)$.  
(The existence of this integral was also shown in [\textit{Op.\ cit.}]).  
Moreover, it was shown in \cite{Barth} that $F^{Kob}_U (p,v)$ is locally 
comparable to the standard Euclidean norm of $v$ in $\CC^n$. This 
leads easily to the fact that for 
every free homotopy class of maps $S^k \to U$, there is a smooth map 
$f\colon S^k \to U$ (as already noted) and the smooth map will have finite 
Hausdorff $k$-measure relative to the Kobayashi metric, denoted by 
$\mu^k_{\textrm{Kob},U} (f(S^k))$.  We call such a map \textit{$k$-rectifiable} 
(meaning: Hausdorff $k$-measure is finite), and the $k$-measure just 
constructed will be called, in this paper, the \textit{Hausdorff-Kobayashi 
$k$-measure}.
\medskip

Thus, one can define an invariant
\begin{align*}
\ell_k (U) = 
&\textrm{ infimum of the Hausdorff-Kobayashi $k$-measures}\\
&\textrm{ over all the Hausdorff-Kobayashi $k$-rectifiable }\\
&\textrm{ representatives of the nontrivial free homotopy }\\
&\textrm{ classes in $U$}
\end{align*}

\begin{lemma}
If $F\colon U \to V$ is a holomorphic mapping from a bounded domain in
$\CC^n$ to a bounded domain in $\CC^m$ and if it is 
injective on free homotopy in the sense that the $F$-image of 
every nontrivial $k$-class in $U$ is nontrivial in $V$, then 
$\ell_k (V) \le \ell_k (U)$.
\end{lemma}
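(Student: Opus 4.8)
The plan is to follow verbatim the template of Proposition~\ref{ell-inv}, replacing ``length'' by ``Hausdorff--Kobayashi $k$-measure'' and ``rectifiable loop'' by ``$k$-rectifiable representative of a free $k$-homotopy class''. Everything reduces to two ingredients. First, a holomorphic map $F\colon U\to V$ is distance nonincreasing for the respective Kobayashi distances. Second, a distance nonincreasing map between metric spaces does not increase $k$-dimensional Hausdorff measure. Granting these, the conclusion comes from the same $\epsilon$-chase over free homotopy classes used before.

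For the first ingredient I would simply invoke the standard distance-decreasing property of the Kobayashi pseudodistance under holomorphic maps (\cite{Kobayashi70,Kobayashi98}); since $U$ and $V$ are bounded, these pseudodistances are genuine distances, so $F$ satisfies $d_V(F(p),F(q))\le d_U(p,q)$ for all $p,q\in U$, where $d_U,d_V$ denote the Kobayashi distances. For the second ingredient: if $\Phi\colon(X_1,\rho_1)\to(X_2,\rho_2)$ is distance nonincreasing and $A\subset X_1$, then any cover $A=\bigcup_i A_i$ with $\delta(A_i)<\epsilon$ produces a cover $\Phi(A)=\bigcup_i\Phi(A_i)$ with $\delta(\Phi(A_i))\le\delta(A_i)<\epsilon$; taking infima over covers and then the supremum over $\epsilon$ gives $\mu^k_{X_2,\rho_2}(\Phi(A))\le\mu^k_{X_1,\rho_1}(A)$. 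Applied to $\Phi=F$ this yields $\mu^k_{\textrm{Kob},V}(F(S))\le\mu^k_{\textrm{Kob},U}(S)$ for every $S\subset U$.

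Then I would fix a nontrivial free $k$-homotopy class $c$ in $U$, and for $\epsilon>0$ pick a Hausdorff--Kobayashi $k$-rectifiable representative $f\colon S^k\to U$ of $c$ whose $k$-measure $\mu^k_{\textrm{Kob},U}(f(S^k))$ is within $\epsilon$ of the infimum over representatives of $c$. The composite $F\circ f\colon S^k\to V$ is continuous (indeed smooth), its image $F(f(S^k))$ has finite Hausdorff--Kobayashi $k$-measure by the second ingredient, and by the injectivity-on-free-$k$-homotopy hypothesis $F\circ f$ represents a nontrivial class in $V$. Hence $F\circ f$ is an admissible competitor for $\ell_k(V)$, so
\[
\ell_k(V)\le\mu^k_{\textrm{Kob},V}\big(F(f(S^k))\big)\le\mu^k_{\textrm{Kob},U}\big(f(S^k)\big)<\inf\nolimits_{g\in c}\mu^k_{\textrm{Kob},U}(g(S^k))+\epsilon .
\]
Letting $\epsilon\to0$ and taking the infimum over all nontrivial classes $c$ in $U$ gives $\ell_k(V)\le\ell_k(U)$.

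The delicate point is not any single estimate but the bookkeeping about what counts as an admissible representative: I would need to check that $F\circ f$, though possibly singular, is exactly the type of object over which $\ell_k(V)$ is an infimum — a continuous map of $S^k$ with image of finite Hausdorff--Kobayashi $k$-measure — and that its image \emph{as a set} is $F(f(S^k))$, so that the measure monotonicity applies literally. The hypothesis that $F$ is injective on free $k$-homotopy is precisely what forces the competitor to be nontrivial; it plays the role that injectivity of $f_*$ played in Proposition~\ref{ell-inv}, and without it the argument breaks down.
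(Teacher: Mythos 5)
Your proposal is correct and follows essentially the same route as the paper: the paper's own proof also reduces to the observations that $F$ is Kobayashi-distance nonincreasing, hence diameter nonincreasing, so Hausdorff sums do not increase, combined implicitly with the $\epsilon$-chase of Proposition~\ref{ell-inv}. You have merely spelled out the bookkeeping (the cover argument for Hausdorff measure and the passage through a near-optimal representative) that the paper leaves as ``follows immediately.''
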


\noindent
\textit{Proof}.
This follows immediately from the fact that $F$ is distance nonincreasing and 
hence diameter nonincreasing with respect to the Kobayashi metrics of $U$ and
$V$, so the Hausdorff sums are not increased by composition with $F$.
\hfill $\Box$
\medskip

\section{Illustration of the $k$-homotopy invariant}
\label{Illust}

The most natural way to find a bounded connected open set in $\CC^n$ which 
has a nontrivial $k$-homotopy class is to take a tubular neighborhood of an
embedded $k$-sphere.  This is particularly interesting from the viewpoint of 
complex analysis if one takes the $k$-sphere to be totally real as a submanifold.
In particular, the unit $k$-sphere in $\RR^{k+1}$ is totally real in $\CC^{k+1}$.
Let $T_r$ be the radius $r>0$ tubular neighborhood 
\begin{align*}
T_r = \{ (x_0, \ldots, x_k) + \lambda \vec u \colon
& x_0^2 + \ldots + x_k^2 = 1, \\
& \vec u \in \CC^{k+1}, ~\|u\|=1, ~0 \le\lambda < r \}.
\end{align*}
For small values of $r$, this is a strongly pseudoconvex domain with $C^\infty$
boundary (actually $0<r<\frac12$ suffices).  According to \cite{Graham}, the
Kobayashi-Royden metric goes to $+\infty$ near the boundary 
relative to the Euclidean metric.
Note that there is a natural smooth projection, say $P\colon T_r \to S^k$,
defined by $P(z) =$ the closest point to $z$ in the Euclidean sense in the set 
$S^k$.  The injection $S_k \hookrightarrow T_r$ is a homotopy equivalence 
with $P$ its
homotopy inverse.  And the identity (injection) map $S_k \to T_r$ is
homotopically nontrivial.  There may be other smooth maps, say $\Gamma
\colon S^k \to T_r$, homotopic to the injection which have the associated
Hausdorff-Kobayashi $k$-measure assigned to them being less than the 
measure assigned to the injection.  But the infimum will be realized among
the family of maps with image lying in some set of the form
\[
\{\vec x + \lambda \vec u \colon \vec x \in S_k, \vec u \in \CC^{k+1}, 
~\|u\| = 1, ~0 \le \lambda \le r_1\}
\]
for some $r_1 < r$: this follows easily from the estimates on the growth of the 
Kobayashi metric near the boundary of $T_r$ compared to the Euclidean
metric. This will yield a conclusion similar to Theorem \ref{AnnThm0},
once a technical point about the Kobayashi/Royden infinitesimal metric is 
established.  This will be discussed further in the following sections.

\section{Monotonicity of the Kobayashi metric}
\label{mono1}

To find some analogue in this situation of Theorems \ref{AnnThm0},
one needs to have an idea of strict monotonicity of the Kobayashi distance 
for the domains (open connected subsets) in $\CC^n$.  Specifically, if 
two domains $\Omega_1$ and $\Omega_2$ satisfy $\Omega_1 \subset
\Omega_2$, then of course  their respective Kobayashi-Royden metrics 
$F^\textrm{Kob}_{\Omega_1}, F^\textrm{Kob}_{\Omega_2}$ satisfy 
$F^\textrm{Kob}_{\Omega_2} (p,v) \le F^\textrm{Kob}_{\Omega_1} (p,v)$ 
for all $p \in \Omega_1$ and $v \in \CC^n$.  
But we would like to obtain a strict comparison between them.

\begin{lemma} \label{compare}
If $\Omega_1$ and $\Omega_2$ are bounded domains in $\CC^n$ satisfying 
$\Omega_1 \subset\subset \Omega_2$ (relatively compact), then for any 
compact subset $K$ of $\Omega_1$ there exists a constant $c_K$ with 
$0<c_K<1$ such that 
\[
F^\textrm{Kob}_{\Omega_2} (p,v) < c_K F^\textrm{Kob}_{\Omega_1} (p,v),
\]
for any $p \in K$ and any $v \in \CC^n$.
\end{lemma}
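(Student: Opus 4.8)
The strategy is to exploit the definition of the Kobayashi--Royden metric via competitor disks together with the relative compactness $\Omega_1 \subset\subset \Omega_2$, using a uniform ``room to grow'' estimate. The plan is as follows. First I would fix $p \in K$ and $v \in \CC^n$ with $v \neq 0$ (the case $v=0$ being trivial since both sides vanish), and pick a competitor $f \in \cO(\Delta, \Omega_1)$ with $f(0)=p$, $df(0) = v/r$, and $r$ close to $F^\textrm{Kob}_{\Omega_1}(p,v)$. The key point is that, because $\Omega_1 \subset\subset \Omega_2$, there is a fixed $\eta > 0$ with $\mathrm{dist}(\overline{\Omega_1}, \partial \Omega_2) \geq \eta$ in the Euclidean metric. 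I want to use this extra room to enlarge the disk $f$ into a competitor for $F^\textrm{Kob}_{\Omega_2}$ with a slightly larger derivative at $0$, which will produce the strictly smaller value on the left-hand side.

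The concrete device I would use: compose $f$ with a dilation in the source. Consider $g_t(z) := f(tz)$ for $t$ slightly larger than $1$; this does not quite work because $f$ is only defined on $\Delta$. So instead the right move is to perturb in the target. The natural trick is this: since the closed Euclidean ball of radius $\eta$ around each point of $\overline{\Omega_1}$ lies in $\Omega_2$, for any $f \in \cO(\Delta, \Omega_1)$ and any holomorphic $h \in \cO(\Delta, \CC^n)$ with $\sup_\Delta \|h\| \leq \eta$, the map $f + h$ maps $\Delta$ into $\Omega_2$. Choosing $h(z) = \eta z \cdot w$ for a suitable unit vector $w$ and then re-examining the derivative is one option, but the cleanest is to choose $h(z) = c z\, \xi$ where $\xi$ is chosen parallel to the direction realized by $f$: more precisely, pick $h(z) = s z\, f'(0)/\|f'(0)\|$ with $0 < s \leq \eta$, so that $(f+h)'(0) = f'(0) + s f'(0)/\|f'(0)\|$ has the same direction as $v$ but larger norm. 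Then $f+h \in \cO(\Delta,\Omega_2)$, $(f+h)(0) = p$, and $(f+h)'(0) = v/r'$ where $r' = r\|f'(0)\|/(\|f'(0)\| + s) = r \cdot \frac{\|v\|/r}{\|v\|/r + s}$. A short computation gives $r' = r \cdot \frac{\|v\|}{\|v\| + sr}$, so $F^\textrm{Kob}_{\Omega_2}(p,v) \leq r' < r$. The ratio $r'/r = \|v\|/(\|v\| + sr)$ must be bounded away from $1$ \emph{uniformly in $p \in K$ and $v$}; since it is homogeneous of degree $0$ in $v$, and using $r \leq$ (some uniform bound for $F^\textrm{Kob}_{\Omega_1}$ on $K$, which exists by Barth's comparison with the Euclidean norm, or just by monotonicity from a ball inside $\Omega_1$) together with $r \geq$ (a uniform positive lower bound, since $F^\textrm{Kob}_{\Omega_1} \geq F^\textrm{Kob}_{B}$ for a large ball $B \supset \Omega_1$ and the latter is bounded below on $K \times S^{2n-1}$), one extracts a constant $c_K < 1$.

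The main obstacle I anticipate is ensuring the enlargement constant is genuinely uniform over all $v$, not just all $p \in K$. The danger is that if $F^\textrm{Kob}_{\Omega_1}(p,v)$ could be arbitrarily large (for $v$ of bounded norm) the gain $sr$ would dominate and give $r'/r \to 0$, which is fine, or if it could be arbitrarily small the gain would become negligible and $r'/r \to 1$, which is \emph{bad}. So the crucial input is a uniform \emph{lower} bound $F^\textrm{Kob}_{\Omega_1}(p,v) \geq \kappa \|v\|$ for $p$ ranging over the compact set $K$; this follows from choosing a Euclidean ball $B$ with $\Omega_1 \subset B$, whence $F^\textrm{Kob}_{\Omega_1}(p,v) \geq F^\textrm{Kob}_B(p,v)$, and the explicit Kobayashi metric of a ball is bounded below by a constant multiple of $\|v\|$ on the compact $K \subset B$. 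With that in hand, taking $s = \eta$ (or any fixed fraction of it), one gets $r'/r = \|v\|/(\|v\| + \eta r) \leq \|v\|/(\|v\| + \eta \kappa \|v\|) = 1/(1+\eta\kappa) =: c_K < 1$, uniformly. Finally, since this inequality holds for every competitor $f$, taking the infimum over $f$ on the right gives $F^\textrm{Kob}_{\Omega_2}(p,v) \leq c_K\, F^\textrm{Kob}_{\Omega_1}(p,v)$ as claimed. I would end by remarking that $c_K$ depends only on $K$, $\Omega_1$, $\Omega_2$ through the ball $B$ and the distance $\eta$, consistent with the statement.
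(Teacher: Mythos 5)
Your proof is correct and follows essentially the same approach as the paper: perturb a near-optimal competitor disk $f\in\cO(\Delta,\Omega_1)$ by adding a linear term in the direction of $v$ (the paper normalizes $\|v\|=1$ and uses $\tilde h(z)=h(z)+\delta zv$, which coincides with your $f+sz\,v/\|v\|$), then use a uniform positive lower bound for $F^\textrm{Kob}_{\Omega_1}$ on the compact set $K$ to extract a uniform contraction constant. The identification of the lower bound $F^\textrm{Kob}_{\Omega_1}\geq\kappa\|v\|$ on $K$ as the key uniformity input is exactly the role played by the constant $b$ in the paper's proof.
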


\noindent
\textit{Proof}. 
Let the bounded domains $\Omega_1$ and $\Omega_2$ in
$\CC^n$ satisfy $\Omega_1 \subset\subset \Omega_2$, i.e., $\Omega_1$
is a relatively compact subdomain of $\Omega_2$. 

Then there exists $\delta >0$ such that the Euclidean distance between 
$\Omega_1$ and $\CC^n \setminus \Omega_2$ is at least $\delta$.
\medskip

Let $p \in \Omega_1$. Notice that there exist positive numbers $b$ and $B$ such that 
\[
b \le F^\textrm{Kob}_{\Omega_1} (p, v) \le B, ~\forall v \in \CC^n \textrm{ with }
\|v\| = 1,
\]
where $\|~\|$ denotes the standard Euclidean norm.
Fix $p \in \Omega_1$ and $v \in \CC^n$ with $\|v\| = 1$. 
Let $\epsilon > 0$ be given arbitrarily, and then
choose $h\in \mathcal{O} (\Delta, \Omega_1)$
satisfying $h(0)=p$, $h'(0) = v/r$ for some $r>0$ and 
\[
F^\textrm{Kob}_{\Omega_1} (p,v) \le r < F^\textrm{Kob}_{\Omega_1} (p,v) + \epsilon.
\]
Take
\[
\tilde h (z) = h(z) + \delta z v.
\]
Then 
\[
\tilde h(0) = h(0) = p, ~\tilde h'(0) = h'(0) + \delta v = \Big(\frac1r + \delta \Big) v
\]
and 
\[
\tilde h(z) \subset \Omega_2,
\ \forall z \in \Delta,
\]
since $\|\tilde h(z) - h(z)\| = \| \delta z v\| < \delta$ for every $z \in \Delta$.
So $\tilde h \in \mathcal{O} (\Delta, \Omega_2)$.  
\medskip

This implies 
\begin{align*}
F^\textrm{Kob}_{\Omega_2} (p,v) &\le \frac1{1/r + \delta } \\
& = \frac{r}{1+r\delta} \\
& \le \frac{F^\textrm{Kob}_{\Omega_1}(p,v) 
    + \epsilon}{1+\delta F^\textrm{Kob}_{\Omega_1} (p,v)} \\
& \le \frac1{1+ \delta b} (F^\textrm{Kob}_{\Omega_1}(p,v) + \epsilon).
\end{align*}
Since $\epsilon>0$ is arbitrary, we obtain that
\[
F^\textrm{Kob}_{\Omega_2} (p,v) 
\le \frac1{1+\delta b} F^\textrm{Kob}_{\Omega_1} (p,v) 
\]
for any $v \in \CC^n$, due to the homogeneity of the Kobayashi-Royden
metric.

Note that $b$ depends on the location of $p$.  But on a compact set it 
stays bounded away from zero, and the desired conclusion follows immediately.
\hfill $\Box$
\bigskip

The restriction in Lemma \ref{compare} to a compact set $K$ can be removed.

\begin{lemma} \label{compare-2}
If $U$ is a bounded open set in $\CC^n$ with its closure contained in another
bounded open set $V$, then there is a constant $c \in (0,1)$ such that 
\[
F^\textrm{Kob}_V (p,v) \le c F^\textrm{Kob}_U (p,v) \quad \forall (p, v) \in U \times \CC^n,
\]
where $F^\textrm{Kob}_U$ and $F^\textrm{Kob}_V$ are the infinitesimal Kobayashi-Royden metrics
of $U$ and $V$, respectively.
\end{lemma}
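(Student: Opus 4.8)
The plan is to bootstrap from Lemma~\ref{compare} (the compact-subset version) to the global statement. The essential point is that the pointwise bound from Lemma~\ref{compare} has a constant $c_K$ that in principle depends on $K$, but the proof of that lemma actually produces $c_K = (1+\delta b_K)^{-1}$, where $\delta$ is the Euclidean distance between $\overline{U}$ and $\CC^n \setminus V$ (which is a fixed positive number, since $\overline{U}$ is compact and disjoint from the closed set $\CC^n\setminus V$), and $b_K$ is a lower bound for $F^{\textrm{Kob}}_U(p,v)$ over $p \in K$, $\|v\|=1$. So the whole issue reduces to showing that $F^{\textrm{Kob}}_U(p,v)$ stays bounded below by a positive constant $b$ \emph{uniformly} over all $p \in U$ (not just over a fixed compact subset) and all unit vectors $v$.

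First I would fix $\delta > 0$ as the Euclidean distance from $\overline U$ to $\CC^n \setminus V$; since $U$ is bounded and $\overline U \subset V$ with $V$ open, this $\delta$ is strictly positive. Next I would establish the uniform lower bound: because $U$ is bounded, it is contained in some Euclidean ball $B_R$, and the decreasing property of the Kobayashi--Royden metric under inclusion gives $F^{\textrm{Kob}}_U(p,v) \ge F^{\textrm{Kob}}_{B_R}(p,v)$ for all $p \in U$. The Kobayashi--Royden metric of a ball is explicitly computable and is bounded below by a positive constant depending only on $R$ and the diameter of $U$ (one only needs that for $p$ ranging over the relatively compact subset $U$ of $B_R$, the quantity $F^{\textrm{Kob}}_{B_R}(p,v)$ with $\|v\|=1$ is bounded away from $0$ --- which follows from the explicit formula, or more softly from Barth's result that $F^{\textrm{Kob}}_{B_R}$ is locally comparable to the Euclidean norm together with compactness of $\overline U$ inside $B_R$). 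Call this uniform lower bound $b > 0$.

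With $\delta$ and $b$ in hand, I would rerun the construction in the proof of Lemma~\ref{compare} verbatim: given $p \in U$, $v$ with $\|v\|=1$, and $\epsilon > 0$, pick $h \in \mathcal O(\Delta, U)$ with $h(0)=p$, $h'(0)=v/r$, and $r < F^{\textrm{Kob}}_U(p,v) + \epsilon$; then $\tilde h(z) = h(z) + \delta z v$ maps $\Delta$ into $V$ (since $\|\tilde h(z) - h(z)\| = \delta|z|\,\|v\| < \delta$), so $F^{\textrm{Kob}}_V(p,v) \le (1/r+\delta)^{-1} = r/(1+r\delta) \le (F^{\textrm{Kob}}_U(p,v)+\epsilon)/(1+\delta b)$. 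Letting $\epsilon \to 0$ and using homogeneity to drop the normalization $\|v\|=1$ yields $F^{\textrm{Kob}}_V(p,v) \le c\, F^{\textrm{Kob}}_U(p,v)$ with $c = (1+\delta b)^{-1} \in (0,1)$, uniformly in $(p,v)$.

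The main obstacle --- and the only real content beyond reproducing the earlier argument --- is the uniform positive lower bound on $F^{\textrm{Kob}}_U(p,v)$ over \emph{all} of $U$. This is exactly where boundedness of $U$ is used, via comparison with an enclosing ball; without boundedness the infinitesimal metric could degenerate (e.g.\ on $\CC$ it vanishes identically), and the estimate would fail. Everything else is a routine repetition of the computation in Lemma~\ref{compare} with the now-uniform constant $b$ in place of the $K$-dependent $b_K$.
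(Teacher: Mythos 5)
Your proof is correct, but it takes a genuinely different route from the paper's. The paper treats Lemma~\ref{compare} as a black box: it interposes a bounded open set $W$ with $\mathrm{cl}(U) \subset W \subset \mathrm{cl}(W) \subset V$, applies Lemma~\ref{compare} to the pair $(W, V)$ with the compact set $K = \mathrm{cl}(U)$ to obtain $F^{\mathrm{Kob}}_V \le c\, F^{\mathrm{Kob}}_W$ on $\mathrm{cl}(U)$, and then invokes the decreasing property $F^{\mathrm{Kob}}_W \le F^{\mathrm{Kob}}_U$ on $U$ (since $U \subset W$) to conclude. You instead reopen the proof of Lemma~\ref{compare} and show directly that its two ingredient constants $\delta$ and $b$ can be made uniform over all of $U$: the constant $\delta = \mathrm{dist}(\overline{U}, \CC^n\setminus V)$ never depended on $K$ to begin with, and the lower bound $b$ on $F^{\mathrm{Kob}}_U(p,v)$ for unit $v$ becomes uniform in $p \in U$ via comparison with an enclosing ball $B_R \supset U$, for which the Kobayashi--Royden metric is explicitly bounded below by a positive constant. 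Both arguments are sound. The paper's route is more modular, since it never looks inside Lemma~\ref{compare} and gets the unrestricted statement ``for free'' from the buffer domain $W$; your route is more hands-on but in exchange produces an explicit constant $c = (1+\delta b)^{-1}$, and isolates precisely where boundedness of $U$ enters (it is what keeps $F^{\mathrm{Kob}}_U$ from degenerating, as you correctly note). One minor point: once you compare with an enclosing ball, the explicit formula gives $F^{\mathrm{Kob}}_{B_R}(p,v) \ge \|v\|/R$ for \emph{all} $p \in B_R$, so the appeal to compactness of $\overline{U}$ inside $B_R$ and Barth's theorem is not actually needed there, though it does no harm.
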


\noindent
\textit{Proof}.
Denote by $\textrm{cl}(A)$ the closure in $\CC^n$ of the subset $A$ of $\CC^n$.
There is a bounded open set $W$ satisfying
\[
\textrm{cl} (U) \subset W \subset \textrm{cl} (W) \subset V.
\]
With $W$ so chosen, we have:
\[
F^\textrm{Kob}_W (q, v) \le F^\textrm{Kob}_U (q, v) \quad \forall (q, v) \in U \times \CC^n, 
\]
and
\[
F^\textrm{Kob}_V (p,v) \le F^\textrm{Kob}_W (p,v) \quad \forall (p, v) \in W \times \CC^n.
\]
Lemma \ref{compare} gives that there is a constant $c$ with $0<c<1$ such 
that 
\[
F^\textrm{Kob}_V (q, v) \le c F^\textrm{Kob}_W (q, v) \quad \forall (q, v) \in \textrm{cl} (U) \times \CC^n.
\]
In particular, this yields that
\[
F^\textrm{Kob}_V (p,v) \le cF^\textrm{Kob}_W (p,v) \le c F^\textrm{Kob}_U (p,v) \quad \forall (p,v) \in U \times
\CC^n,
\]
as desired.
\hfill $\Box$


\section{The example from Section \ref{Illust} concluded}
\label{mono2}

The results of Section \ref{Higher} and Section \ref{Illust} together with the 
growth of the Kobayashi-Royden metric established in \cite{Graham} can be
combined to establish the results
about the domains $T_r$ defined in Section \ref{Higher}:

First we note that the $\ell_k$-invariants are nonzero in this case.  As before, 
we assume that all $r$-values are small enough that $T_r$ is strongly 
pseudoconvex.

\begin{lemma} 
With $T_r$ as defined in Section \ref{Higher}, and $\ell_k$ defined as before,
\[
\ell_k (T_r) > 0.
\]
\end{lemma}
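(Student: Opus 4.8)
The plan is to show that any $k$-rectifiable representative $\Gamma\colon S^k\to T_r$ of the nontrivial $k$-homotopy class must have Hausdorff–Kobayashi $k$-measure bounded below by a positive constant depending only on $r$ and $k$, from which $\ell_k(T_r)>0$ follows by taking the infimum. The starting point is the observation already made in Section \ref{Illust}: by the boundary growth of the Kobayashi–Royden metric established in \cite{Graham}, the infimum defining $\ell_k(T_r)$ is realized among maps whose image lies in a fixed compact core $C_{r_1}=\{\vec x+\lambda\vec u\colon \vec x\in S^k,\ \|\vec u\|=1,\ 0\le\lambda\le r_1\}$ with $r_1<r$. So it suffices to bound below the Hausdorff–Kobayashi $k$-measure of $\Gamma(S^k)$ for all such $\Gamma$ with image in $C_{r_1}$.

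On the compact set $C_{r_1}\subset T_r$, the result of \cite{Barth} (local comparability of $F^{\mathrm{Kob}}_{T_r}$ with the Euclidean norm) gives a constant $m>0$ with $F^{\mathrm{Kob}}_{T_r}(p,v)\ge m\|v\|$ for all $p\in C_{r_1}$ and $v\in\CC^{k+1}$; integrating along curves, this yields $d_{\mathrm{Kob}}(p,q)\ge m\|p-q\|_{\mathrm{Euc}}$ for $p,q$ in a slightly smaller core, hence $\delta_{\mathrm{Kob}}(A)\ge m\,\delta_{\mathrm{Euc}}(A)$ for subsets $A$ of that core. Consequently $\mu^k_{\mathrm{Kob},T_r}(\Gamma(S^k))\ge m^k\,\mu^k_{\mathrm{Euc}}(\Gamma(S^k))$. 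Thus the problem reduces to a purely Euclidean statement: there is $c_0>0$ such that $\mu^k_{\mathrm{Euc}}(\Gamma(S^k))\ge c_0$ for every continuous $\Gamma\colon S^k\to T_r$ homotopic to the inclusion $S^k\hookrightarrow T_r$.

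For the Euclidean lower bound I would use the degree/projection argument. Let $P\colon T_r\to S^k$ be the nearest-point retraction from Section \ref{Higher}; it is Lipschitz with some constant $L$ depending only on $r$. Since $\Gamma$ is homotopic to the inclusion, $P\circ\Gamma\colon S^k\to S^k$ is homotopic to the identity, hence has degree $1$ and in particular is surjective; moreover its image has full $k$-measure $\ge\omega_k$ (the volume of the unit $k$-sphere), because a degree-one map cannot decrease total measure below $\omega_k$ — concretely, $\omega_k=\int_{S^k}d\mathrm{vol}\le \int_{S^k}|(P\circ\Gamma)^*d\mathrm{vol}|$ by the area formula applied to a smooth approximation, and the left side is a topological lower bound. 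Since $P$ is $L$-Lipschitz, $\mu^k_{\mathrm{Euc}}(P(\Gamma(S^k)))\le L^k\,\mu^k_{\mathrm{Euc}}(\Gamma(S^k))$, and $P(\Gamma(S^k))=S^k$ has measure $\omega_k$; therefore $\mu^k_{\mathrm{Euc}}(\Gamma(S^k))\ge\omega_k/L^k=:c_0>0$. Combining, $\ell_k(T_r)\ge m^k c_0>0$.

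The main obstacle is the measure-theoretic step for merely continuous (or merely $k$-rectifiable) maps: the clean area-formula argument applies to smooth maps, so one must either (i) note that by Section \ref{Higher} every free homotopy class already has a smooth representative and the infimum over smooth representatives equals $\ell_k(T_r)$, reducing to the smooth case, or (ii) verify directly that a $1$-Lipschitz (distance-nonincreasing) image of a set of $k$-measure $<\omega_k/L^k$ cannot surject onto $S^k$ under an $L$-Lipschitz map whose composition is degree one — this is standard Hausdorff-measure/Lipschitz-map behavior but should be stated carefully. I would take route (i), invoking the smooth-representative remarks already in the excerpt, so that the degree and area-formula computations are legitimate.
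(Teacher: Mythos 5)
Your proposal is correct, and the essential mechanism is the same as the paper's: convert the Hausdorff--Kobayashi $k$-measure bound into a Euclidean one using a pointwise comparison $F^{\mathrm{Kob}}_{T_r}(p,v)\ge c\|v\|$, and then argue that the Euclidean $k$-measure of a homotopically nontrivial image cannot be too small. The differences are (a) the paper obtains the global comparison constant $c$ directly, by combining Graham's estimate $F^{\mathrm{Kob}}_{T_r}\ge\|v\|$ outside a compact $K$ with a positive lower bound on $K$ from compactness; it does not restrict attention to a fixed compact core $C_{r_1}$, whereas you invoke the Section~\ref{Illust} remark that the infimum is attained there --- that remark is only asserted in the paper, not proved, so the paper's route is a bit more self-contained and you could drop the detour entirely; and (b) the paper dismisses the Euclidean lower bound as ``elementary,'' while you spell it out via the Lipschitz projection $P$ and a degree argument. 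That spelled-out argument is essentially what the paper later records in the proof of Lemma~\ref{pos} for the more general $V^k_r>0$, so your instinct is exactly right. One small simplification: you don't need the area formula or a smooth-approximation dichotomy at all. Since $\Gamma$ is homotopic to the inclusion, $P\circ\Gamma\colon S^k\to S^k$ has degree~$1$, hence is surjective, so $P(\Gamma(S^k))=S^k$; as $P$ is $L$-Lipschitz and Hausdorff $k$-measure scales by at most $L^k$ under an $L$-Lipschitz map, one gets $\omega_k=\mu^k_{\mathrm{Euc}}(S^k)\le L^k\,\mu^k_{\mathrm{Euc}}(\Gamma(S^k))$ directly, with no smoothness required --- this is your route~(ii) and it is cleaner than route~(i).
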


\noindent
\textit{Proof}. Since $T_r$ has smooth strictly pseudoconvex boundary,
Graham \cite{Graham} gives that there is a compact subset $K$ of $T_r$ 
such that for each $p \in T_r \setminus K$ and all $v$, 
\[
F^\textrm{Kob}_{T_r} (p,v) \ge \|v\|,
\]
where $\|~\|$ represents the usual Euclidean norm.
For such a fixed compact set $K$, there is a constant $c>0$ such that 
\[
F^\textrm{Kob}_{T_r} (p,v) \ge c\|v\|
\]
for all $p \in K$ and all $v \in \CC^n$. 

Replace $c$ by $\min \{c, 1\}$.  It follows 
that $\ell_k (T_r) \ge c^k \ L(T_r)$, where $L(T_r)$ represents
the infimum of the Euclidean Hausdorff $k$-measure of the image of
$S^k$ in $T_r$ not homotopic to a constant. This latter infimum is
positive by elementary considerations.
\hfill $\Box$
\medskip

\begin{theorem} \label{gen-Hada}
If $0<r<s$, then $\ell_k (T_r) > \ell_k (T_s)$.  In particular, $T_r$
is not biholomorphic to $T_s$.
\end{theorem}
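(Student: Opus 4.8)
The plan is to establish the strict inequality $\ell_k(T_r) > \ell_k(T_s)$ for $0 < r < s$ by combining the non-strict monotonicity coming from the inclusion $T_r \subset\subset T_s$ (Lemma, via the $\ell_k$ being distance-nonincreasing under holomorphic, hence under inclusion, maps) with the \emph{strict} comparison of Kobayashi--Royden metrics from Lemma~\ref{compare-2}, together with the key reduction observed in Section~\ref{Illust}: the infimum defining $\ell_k(T_r)$ is in fact realized (or approached arbitrarily closely) by maps whose image lies in a \emph{uniformly smaller} tube $T_{r_1}$ with $r_1 < r$, where the metric comparison constant can be quantified.

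First I would make precise the reduction from Section~\ref{Illust}. Using Graham's boundary growth estimate for $F^{\textrm{Kob}}_{T_r}$ relative to the Euclidean metric, one shows there is $r_1 < r$ and a constant so that any map $\Gamma\colon S^k \to T_r$ whose image meets $T_r \setminus \overline{T_{r_1}}$ has Hausdorff--Kobayashi $k$-measure strictly larger than the infimum; hence, up to discarding such maps, we may restrict attention to $k$-rectifiable representatives $\Gamma$ of the nontrivial class with $\Gamma(S^k) \subset \overline{T_{r_1}} \subset\subset T_r \subset\subset T_s$. Second, I would apply Lemma~\ref{compare-2} (or Lemma~\ref{compare} applied to the compact set $\overline{T_{r_1}}$) to get a constant $c \in (0,1)$ with $F^{\textrm{Kob}}_{T_s}(p,v) \le c\, F^{\textrm{Kob}}_{T_r}(p,v)$ for all $p \in \overline{T_{r_1}}$, $v \in \CC^n$. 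Since Kobayashi distances (and hence diameters of subsets, and hence Hausdorff $k$-measures) for $T_s$ are bounded by $c$ times those for $T_r$ \emph{on $\overline{T_{r_1}}$}, every such $\Gamma$ satisfies $\mu^k_{\textrm{Kob},T_s}(\Gamma(S^k)) \le c^k\, \mu^k_{\textrm{Kob},T_r}(\Gamma(S^k))$. Taking the infimum over these $\Gamma$, and noting that the injection $S^k \hookrightarrow T_r$ remains nontrivial in $T_s$ (so the $F$-image of the nontrivial class is nontrivial and $\Gamma$ is an admissible competitor for $\ell_k(T_s)$), yields $\ell_k(T_s) \le c^k\, \ell_k(T_r)$.

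Finally I would combine this with $\ell_k(T_r) > 0$ (the preceding Lemma) to conclude $\ell_k(T_s) \le c^k \ell_k(T_r) < \ell_k(T_r)$ since $c^k < 1$ and $\ell_k(T_r) \in (0,\infty)$; the finiteness of $\ell_k(T_r)$ follows from the existence of a smooth, and hence $k$-rectifiable, representative of the nontrivial class (the explicit injection $S^k \hookrightarrow T_r$). The ``in particular'' statement about non-biholomorphism is then immediate: a biholomorphism $T_r \to T_s$ would be a homotopy equivalence, hence injective on the $k$-th free homotopy class in both directions, forcing $\ell_k(T_r) = \ell_k(T_s)$ by the Lemma applied both ways, contradicting the strict inequality.

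The main obstacle I anticipate is making the first step — the reduction to images inside $\overline{T_{r_1}}$ — fully rigorous at the level of Hausdorff measure rather than just for smooth maps: one must argue that cutting off the part of $\Gamma(S^k)$ lying near $\partial T_r$ and pushing it inward (via the retraction onto $\overline{T_{r_1}}$, or along the normal direction to $S^k$) does not increase, and in fact strictly decreases, the Hausdorff--Kobayashi $k$-measure when the excised part is nonempty, while staying in the same free homotopy class. This requires the quantitative form of Graham's estimate (the metric blows up like a negative power of the boundary distance) together with a covering argument comparing $\mu^k_{\textrm{Kob},T_r}$ to $\mu^k_{\textrm{Euc}}$ on the annular region $T_r \setminus \overline{T_{r_1}}$; the rest of the proof is then a routine diameter-monotonicity computation for Hausdorff measures.
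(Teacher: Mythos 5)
Your overall strategy is sound and essentially the same as the paper's: establish a strict pointwise comparison of the Kobayashi--Royden metrics of $T_r$ and $T_s$, deduce a corresponding strict inequality of Hausdorff--Kobayashi $k$-measures, and combine with the positivity of $\ell_k$. But the ``key reduction'' you import from Section~\ref{Illust} --- restricting competitors $\Gamma\colon S^k\to T_r$ to those with $\Gamma(S^k)\subset \overline{T_{r_1}}$ for some $r_1<r$ --- is entirely superfluous, and it is precisely the step you flag as the main technical obstacle. You do not need it. Lemma~\ref{compare-2}, applied to $U=T_r$ and $V=T_s$ (note $\overline{T_r}\subset T_s$ since $r<s$), already gives a single constant $c\in(0,1)$ with
\[
F^{\textrm{Kob}}_{T_s}(p,v)\le c\,F^{\textrm{Kob}}_{T_r}(p,v)\quad\text{for all }p\in T_r,\ v\in\CC^n,
\]
with no restriction to a compact subtube. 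Integrating along $C^1$ curves in $T_r$ gives $d_{T_s}(p,q)\le c\,d_{T_r}(p,q)$ for all $p,q\in T_r$, hence diameters, hence Hausdorff $k$-measures, compare with factor $c^k$ for \emph{every} $k$-rectifiable $\Gamma\colon S^k\to T_r$. Passing to the infimum over all nontrivial $\Gamma$ (using that $T_r\hookrightarrow T_s$ is a homotopy equivalence, so nontriviality is preserved) gives $\ell_k(T_s)\le c^k\ell_k(T_r)$ directly. Together with $0<\ell_k(T_r)<\infty$ this forces $\ell_k(T_s)<\ell_k(T_r)$.

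The paper's own proof does not invoke Lemma~\ref{compare-2} by name but reconstructs the same global comparison by patching: near $\partial T_r$ it uses Graham's boundary blow-up of $F^{\textrm{Kob}}_{T_r}$ against the boundedness of $F^{\textrm{Kob}}_{T_s}$ on $T_r\subset\subset T_s$ to get a factor $2$, and on a compact core $K$ it uses Lemma~\ref{compare} to get a factor $1+\epsilon$; taking the minimum yields $F^{\textrm{Kob}}_{T_r}\ge(1+\epsilon)F^{\textrm{Kob}}_{T_s}$ globally on $T_r$. Either way the comparison holds on all of $T_r$, which is why the delicate ``push $\Gamma$ inward to $\overline{T_{r_1}}$ without increasing Hausdorff measure'' argument you sketch in your last paragraph --- which would indeed be awkward at the level of Hausdorff measure for nonsmooth competitors --- never arises. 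The remark in Section~\ref{Illust} about the infimum being realized in a smaller tube is a piece of intuition, not a lemma the final proof relies on. So: your argument is correct, but you have manufactured and then worried about a difficulty that the available lemmas already dissolve.
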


\noindent
\textit{Proof}.
Consider the inclusion map $T_r \hookrightarrow T_s$. By Graham
\cite{Graham}, given any $C>0$, there is a compact subset $K$ of $T_r$ such 
that $F^\textrm{Kob}_{T_r} \ge C \cdot \textrm{(Euclidean metric)}$ at every point 
$p \in T_r \setminus K$, since $F^\textrm{Kob}_{T_r}$ goes to infinity at the boundary
of $T_r$ and hence, choosing $C$ sufficiently large, $F^\textrm{Kob}_{T_r} \ge 2 F^\textrm{Kob}_{T_s}$.
This follows since $F^\textrm{Kob}_{T_s}$ is bounded by some multiple of Euclidean 
metric on $T_r$ since $T_r \subset\subset T_s$.  By Lemma 
\ref{compare} (and its proof) there is an $\epsilon$ with $0<\epsilon < 1$
such that $F^\textrm{Kob}_{T_r} \ge (1+\epsilon) F^\textrm{Kob}_{T_s}$ at every point of $K$. Then
\[
F^\textrm{Kob}_{T_r} \ge (1+\epsilon) F^\textrm{Kob}_{T_s}
\]
at every point of $T_r$.  It follows that 
$\ell_k (T_r) \ge (1+\epsilon)^k \ell_k (T_s)$ and, since $\ell_k (T_s) > 0$,
\[
\ell_k (T_r) > \ell_k (T_s).
\]
This completes the proof.
\hfill $\Box$
\medskip

The arguments used to prove Theorem \ref{AnnThm} can be extended in a 
straightforward way to prove a corresponding result for the domains
of $T_r$ type:

\begin{theorem} \label{homotopy_thm}
If $r_1 > r_2$, then every holomorphic mapping $f\colon T_{r_1}
\to T_{r_2}$ is homotopic to a constant map.
\end{theorem}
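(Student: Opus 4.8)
The plan is to mimic the proof of Theorem~\ref{AnnThm} almost verbatim, using the $k$-dimensional machinery in place of the length functional. First I would reduce the statement to a statement about the $k$-sphere: by the homotopy equivalences recorded in Section~\ref{Illust}, the map $f\colon T_{r_1}\to T_{r_2}$ is homotopic to a constant if and only if the composite $S^k \hookrightarrow T_{r_1} \xrightarrow{f} T_{r_2}$ is null-homotopic, i.e.\ if and only if $f$ sends the generator of $\pi_k(T_{r_1})$ to a trivial class in $\pi_k(T_{r_2})$. So it suffices to show that $f$ \emph{cannot} be injective on the nontrivial $k$-homotopy class, i.e.\ it cannot carry the nontrivial class to a nontrivial class.

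Next I would argue by contradiction: suppose $f$ is injective on the $k$-homotopy class. Then every $k$-rectifiable representative $\Gamma\colon S^k \to T_{r_1}$ of the nontrivial class is carried by $f$ to a $k$-rectifiable representative $f\circ\Gamma$ of a nontrivial class in $T_{r_2}$ (rectifiability is preserved because $f$ is distance nonincreasing, hence diameter nonincreasing, for the Kobayashi metrics, so the Hausdorff sums do not increase — this is exactly the Lemma in Section~\ref{Higher}). Taking the infimum of the Hausdorff–Kobayashi $k$-measures over such $\Gamma$ then gives $\ell_k(T_{r_2}) \le \ell_k(T_{r_1})$. But Theorem~\ref{gen-Hada} (with the roles $r=r_2<s=r_1$, using $r_1>r_2$) gives the strict inequality $\ell_k(T_{r_2}) > \ell_k(T_{r_1})$, a contradiction. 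Hence $f$ is not injective on the nontrivial $k$-class, so by the first paragraph $f$ is homotopic to a constant.

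The one point requiring a little care — and I expect it to be the main obstacle — is the covering/obstruction-theory reduction in the first step: since $k>1$, one cannot literally invoke one-dimensional covering space theory as in Theorem~\ref{AnnThm}, so I would instead note that $T_{r_2}$ deformation retracts onto $S^k$, whose only nontrivial homotopy group in degrees $\le k$ (other than $\pi_0,\pi_1$, which are trivial here for $k\ge 2$) relevant to detecting nontriviality of $f$ on the $k$-class is $\pi_k(S^k)\cong\mathbb Z$; more simply, $f$ factors up to homotopy through the retraction $T_{r_2}\to S^k$, and a map $T_{r_1}\to S^k$ is null-homotopic iff its restriction to the core $S^k\subset T_{r_1}$ is, which in turn is detected by $f_*\colon \pi_k(T_{r_1})\to\pi_k(T_{r_2})$. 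Once this identification is in place the rest is the formal argument above. I would also remark that the same proof, with ``homotopy'' replaced throughout by ``homology'' and $\ell_k$ by its homological analogue, handles the case of tubular neighborhoods of more general compact totally real submanifolds, which is the generality promised in the introduction.
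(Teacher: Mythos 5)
Your proof is correct and is precisely the argument the paper has in mind: the text explicitly says Theorem~\ref{homotopy_thm} follows by extending the proof of Theorem~\ref{AnnThm}, which is exactly the $\ell_k$-monotonicity contradiction you carry out, combining the Lemma of Section~\ref{Higher} (holomorphic maps do not increase the Hausdorff--Kobayashi $k$-measure) with the strict inequality of Theorem~\ref{gen-Hada}, together with the reduction to $\pi_k(T_{r_1})\cong\pi_k(S^k)\cong\mathbb Z$ via the deformation retraction onto the core sphere. (The paper later records an independent second proof, following Theorem~\ref{fix-pt}, via the holomorphic contraction fixed point and the multiplicativity of degree, but the primary intended route is the one you gave.)
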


\section{Tubular neighborhoods in general}
\label{general-tubular}

The analysis of tubular neighborhoods of totally real embeddings of spheres
in the previous section can be extended to more general circumstances.  But
this extension involves what amounts to a shift from homotopy to homology:
the role of being homotopically nontrivial is taken over by having degree
not equal to $0$.

The first step is to define the relevant concept of degree:  Suppose that $M$
is a smooth ($C^2$ suffices) compact connected submanifold without boundary of 
a Euclidean space $\RR^N$ and let 
\[
T_r = \bigcup_{w \in M} \{ v \in \RR^N \colon \|v-w\|<r \},
\]
where $\| ~ \|$ is the usual Euclidean norm.  Notice that there exists $R>0$
such that $T_r$, for any $r$ with $0<r<R$, there exists a natural projection
$\pi \colon T_r \to M$ onto $M$, defined by
\[
\pi (v) = \inf_{p\in M} \|p-v\|
\]
so that $\pi (v)$ is the closest point to $v$ among points of $M$, with respect
to the Euclidean distance.  If $g \colon M \to T_r$ is a continuous map of 
$M$ into $T_r$, then it is natural to define the \textit{degree} of $g$, denoted by
$\deg g$, to be the degree of $\pi \circ g \colon M \to M$.  (If $M$ is 
orientable, this is to be the usual $\ZZ$-valued degree.  If $M$ is nonorientable,
we take degree to be $\ZZ_2$-valued.)  If $G\colon T_{r_1} \to T_{r_2}$ is a 
continuous map, with $r_1, r_2  \in (0,R)$, we define
\[
\deg G := \textrm{the degree of } G\circ i \colon M \to T_{r_2},
\]
where $i \colon M \to T_{r_1}$ is an injection.  As well known, these concepts
of degree are multiplicative: the degree of a composition is equal to the product of
the degrees.

With these definitions in sight, the analogue of Theorem \ref{homotopy_thm}
is the following:

\begin{theorem} \label{homology-thm}
Let $M$ be a smooth compact connected totally real submanifold of 
$\CC^n$.  Then there is a constant $R>0$ such that, if $0<r<s<R$ and 
if $F\colon T_s \to T_r$ is holomorphic then, $F$ has degree zero.
\end{theorem}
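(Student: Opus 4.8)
The plan is to imitate the proof of Theorem~\ref{homotopy_thm}, replacing the homotopy-triviality argument by a degree argument and the $\ell_k$-invariant by a ``$k$-dimensional volume over the degree-$d$ maps'' invariant. For a bounded domain $\Omega$ equipped with the Hausdorff--Kobayashi $k$-measure, and for each nonzero degree value $d$, define
\[
m_d(\Omega) := \inf\big\{ \mu^k_{\textrm{Kob},\Omega}(g(M)) \colon g\colon M \to \Omega \textrm{ smooth, } \deg g = d \big\},
\]
where $k = \dim M$ and $\deg$ is the $\mathbb{Z}$- or $\mathbb{Z}_2$-valued degree relative to the projection $\pi\colon T_\bullet \to M$ as defined above. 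First I would check, exactly as for $\ell_k$, that every homology (degree) class contains a smooth finite-$k$-measure representative — this uses the local Euclidean comparability of $F^\textrm{Kob}$ from \cite{Barth} together with the fact that $\pi\circ g$ smooth of degree $d$ can be realized; so $m_d$ is a well-defined (possibly $+\infty$) quantity. The key monotonicity statement is then: if $F\colon U \to V$ is holomorphic and $F_*$ sends degree-$d$ maps into $V$ to maps of nonzero degree, then $m_{d'}(V) \le m_d(U)$ for the corresponding $d'$; this follows word-for-word from the Lemma after the $\ell_k$ definition, since $F$ is Kobayashi-distance nonincreasing, hence diameter nonincreasing, hence does not increase Hausdorff sums.

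Next I would record the two quantitative inputs. The positivity statement $m_d(T_r) > 0$ for $d \ne 0$ is proved exactly as the Lemma giving $\ell_k(T_r)>0$: Graham's boundary blow-up \cite{Graham} confines any minimizing sequence to a compact core of $T_r$, on which $F^\textrm{Kob}_{T_r} \ge c\,\|\cdot\|_{\textrm{Euc}}$, so $m_d(T_r) \ge c^k \cdot L_d(T_r)$ where $L_d(T_r)$ is the Euclidean infimum over degree-$d$ maps, and $L_d(T_r) > 0$ by elementary degree theory (a map of nonzero degree onto $M$ has image of $k$-volume bounded below). The strict comparison $m_d(T_r) > m_d(T_s)$ for $0 < r < s < R$ is proved exactly as Theorem~\ref{gen-Hada}: combining Graham's estimate near $\partial T_r$ with Lemma~\ref{compare} on the core produces $\varepsilon \in (0,1)$ with $F^\textrm{Kob}_{T_r} \ge (1+\varepsilon) F^\textrm{Kob}_{T_s}$ everywhere on $T_r$, whence $m_d(T_r) \ge (1+\varepsilon)^k m_d(T_s)$, and strictness follows from $m_d(T_s)>0$.

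The proof of the theorem itself is then the homology analogue of Theorem~\ref{homotopy_thm}: given holomorphic $F\colon T_s \to T_r$ with $0 < r < s < R$, the degree of $F$ is by definition the degree of $\pi_r \circ F \circ i_s \colon M \to M$, where $i_s\colon M \hookrightarrow T_s$ is the inclusion and $\pi_r\colon T_r \to M$ the nearest-point projection. Suppose $\deg F = d \ne 0$. Then $g := F \circ i_s \circ (\text{smooth minimizing representative of degree } d \text{ in } T_s)$ — more precisely, apply the monotonicity statement from the first paragraph with $U = T_s$, $V = T_r$ — to get $m_d(T_r) \le m_d(T_s)$. But $s > r$, so by the strict comparison $m_d(T_r) > m_d(T_s)$, a contradiction. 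Hence $\deg F = 0$.

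The main obstacle I anticipate is not the monotonicity chain, which is formal, but verifying that $F_*$ genuinely carries a degree-$d$ class to a \emph{degree-$d$} class (rather than merely to \emph{some} class) in the precise sense needed: one must check that $\pi_r \circ (F \circ i_s \circ g) $ and $(\pi_r \circ F \circ i_s) \circ (\pi_s \circ g)$ have the same degree, i.e.\ that composing with $i_s \circ \pi_s$ on the inside does not change the degree of the outer map — this is where multiplicativity of degree and the fact that $i_s \circ \pi_s \colon T_s \to T_s$ is homotopic to the identity (for $r,s < R$) must be invoked carefully, and it is the step where the nonorientable $\mathbb{Z}_2$ case and the orientable $\mathbb{Z}$ case must both be handled. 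A secondary technical point is confirming that $m_d(T_r) < \infty$ — needed so that the inequality $m_d(T_r) \le m_d(T_s)$ is not vacuous — but this is immediate from the smooth-representative construction, since the inclusion-induced map already has finite Hausdorff--Kobayashi $k$-measure by the local Euclidean comparability.
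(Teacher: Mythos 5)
There is a real gap in the degree bookkeeping, and it is exactly the issue you half-flag at the end. You define $m_d(\Omega)$ separately for each nonzero $d$, but when you apply a holomorphic $F\colon T_s \to T_r$ with $\deg F = d \ne 0$ to a degree-$d$ map $h\colon M \to T_s$, the composite $F\circ h$ has degree $(\deg F)(\deg h) = d^2$ (or, if you instead take $h$ of degree $1$, the composite has degree $d$ but you started from $m_1(T_s)$, not $m_d(T_s)$). In either case the monotonicity step yields an inequality of the form $m_{d'}(T_r) \le m_d(T_s)$ with $d' \ne d$, not the $m_d(T_r) \le m_d(T_s)$ you then use. Your strict comparison, on the other hand, only relates $m_{d'}(T_r)$ to $m_{d'}(T_s)$ for the \emph{same} $d'$. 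These two inequalities do not meet: you would need to know in addition that $m_d(T_s)$ and $m_{d'}(T_s)$ compare favorably, and there is no argument for that. (In the $\mathbb{Z}_2$ case there is only one nonzero degree, so the argument closes, but not in the orientable $\mathbb{Z}$-valued case.)

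The paper avoids the problem entirely by \emph{not} fixing $d$: it defines
\[
V^k_r := \inf \{\mu^k_{\textrm{Kob}, T_r}(g(M)) : g\colon M \to T_r \ \text{continuous, rectifiable, } \deg g \neq 0 \},
\]
i.e.\ the infimum over \emph{all} nonzero degrees at once. Then for any $g\colon M\to T_s$ of nonzero degree, $F\circ g$ has nonzero degree by multiplicativity, whence $V^k_r \le \mu^k_{\textrm{Kob},T_r}(F\circ g(M)) \le \mu^k_{\textrm{Kob},T_s}(g(M))$; taking the infimum over $g$ gives $V^k_r \le V^k_s$ with no degree mismatch, contradicting the strict inequality $V^k_s < V^k_r$ from Lemma~\ref{k-vol-com}. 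Your positivity and strict-comparison steps are essentially the paper's Lemmas~\ref{pos} and~\ref{k-vol-com} and are fine; and your concern about whether $\deg(F\circ g) = (\deg F)(\deg g)$ in the projection-induced sense is legitimate but is resolved exactly as you indicate (homotope $g$ to $i_s\circ\pi_s\circ g$ and use multiplicativity for maps $M\to M$); the paper simply asserts this as well known. The fix for your argument is to replace the family $\{m_d\}_{d\ne 0}$ by its infimum over $d$, which is precisely $V^k_r$.
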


The conditions on $R$ here are such that 
\begin{itemize}
\item{} $T_r$ has smooth strongly pseudoconvex boundary and 
\item{} the projection map $\pi\colon T_r \to M$ is well defined (and
continuous in particular),
\end{itemize}
whenever $0<r<R$.

Note that this result is in fact an extension of Theorem \ref{homotopy_thm}
since, in the case that $M$ is a sphere, $F$ being homotopic to a constant
is equivalent to $\deg F = 0$.  But in general, of course, $\deg F = 0$ does
not imply that $F$ is homotopic to a constant.  The most obvious example 
may be the map of $S^p \times S^p$ to itself, $p\ge 1$, identity on the first 
factor and constant on the second.
\medskip

Proof of Theorem \ref{homology-thm} follows the general pattern
of the proofs of previous theorems, but requires some preparation.  
\medskip

First, we need

\begin{definition}
\textup{
Let $k= \dim M$. For $r \in (0,R)$ as above, let 
\begin{align*}
V_r^k 
:= \inf \{ \mu^k_{\textrm{Kob}, T_r} (g(M)) \mid g\colon &M \to T_r \textrm{ continuous} \\ & \textrm{and rectifiable with } \deg g \neq 0 \}.
\end{align*}
}
\end{definition}

Now we need 

\begin{lemma} \label{pos}
$V^k_r > 0$ for any $r \in (0,R)$.
\end{lemma}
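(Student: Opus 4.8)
The plan is to reduce the Hausdorff--Kobayashi $k$-measure to a comparable Euclidean Hausdorff $k$-measure near the submanifold, and then to use a purely topological lower bound on the Euclidean $k$-volume of any cycle of nonzero degree. The key point is that the obstruction to shrinking $g(M)$ too much is the growth of the Kobayashi--Royden metric near $\partial T_r$: by Graham's result \cite{Graham} (which applies since $T_r$ is strongly pseudoconvex for $r<R$), given $C>0$ there is a compact $K\subset T_r$ with $F^{\mathrm{Kob}}_{T_r}(p,v)\ge C\|v\|$ for all $p\in T_r\setminus K$ and all $v$, while on the compact set $K$ there is a constant $c_0>0$ with $F^{\mathrm{Kob}}_{T_r}(p,v)\ge c_0\|v\|$. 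Taking $c=\min\{c_0,1\}$ we get $F^{\mathrm{Kob}}_{T_r}\ge c\cdot(\text{Euclidean metric})$ on all of $T_r$, hence for any subset $A\subset T_r$ one has $\mu^k_{\mathrm{Kob},T_r}(A)\ge c^k\,\mu^k_{\mathrm{Euc}}(A)$ (the Kobayashi distance dominates $c$ times the Euclidean distance, so diameters, and therefore every Hausdorff sum, only go up). This is exactly the argument already used in the Lemma preceding Theorem \ref{gen-Hada}.

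Next I would argue that it suffices to bound from below the Euclidean Hausdorff $k$-measure of $g(M)$ for rectifiable continuous $g\colon M\to T_r$ with $\deg g\neq 0$. Here $\deg g$ is by definition the (integer- or $\mathbb{Z}_2$-valued) degree of $\pi\circ g\colon M\to M$, so $\pi\circ g$ is in particular surjective. I claim there is a constant $a>0$, depending only on $M$ and $r<R$, such that $\mu^k_{\mathrm{Euc}}(g(M))\ge a$ whenever $\deg g\neq 0$. The cleanest route is via the projection $\pi\colon T_r\to M$: since $M$ is $C^2$ compact and $r<R$, $\pi$ is Lipschitz on $T_r$ with some constant $\Lambda=\Lambda(M,r)$, so $\mu^k_{\mathrm{Euc}}(\pi(g(M)))\le \Lambda^k\,\mu^k_{\mathrm{Euc}}(g(M))$. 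But $\pi(g(M))=(\pi\circ g)(M)\supseteq M$ because $\pi\circ g$ has nonzero degree and is therefore onto. Hence $\mu^k_{\mathrm{Euc}}(g(M))\ge \Lambda^{-k}\,\mu^k_{\mathrm{Euc}}(M)=:a>0$, the last quantity being the ordinary $k$-dimensional volume of the compact $k$-manifold $M$, which is certainly positive and finite. Combining, $\mu^k_{\mathrm{Kob},T_r}(g(M))\ge c^k a>0$ for every competitor $g$, so $V^k_r\ge c^k a>0$.

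One technical wrinkle to address is that the competitors $g$ are only assumed continuous and rectifiable, not smooth, so I cannot invoke the area formula; this is why I phrase everything through Hausdorff measure and the surjectivity coming from the degree, rather than through integration of a Jacobian. The surjectivity of a nonzero-degree map $M\to M$ is standard (if a point $q$ were omitted, the map would factor through $M\setminus\{q\}$, which is noncompact, forcing the degree to vanish — for $\mathbb{Z}_2$-degree the same argument works via $H_k(M;\mathbb{Z}_2)$), and the Lipschitz property of the nearest-point projection onto a $C^2$ submanifold within its tubular neighborhood of radius $<R$ is classical.

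The main obstacle, and the only place where real care is needed, is making the comparison $\mu^k_{\mathrm{Kob},T_r}\gtrsim \mu^k_{\mathrm{Euc}}$ legitimate for arbitrary subsets: one must know that the Kobayashi \emph{distance} (not merely the infinitesimal metric) on $T_r$ dominates a constant times the Euclidean distance, which follows by integrating the infinitesimal inequality $F^{\mathrm{Kob}}_{T_r}\ge c\|\cdot\|$ along curves — here I use the Royden result, cited earlier in the paper, that the Kobayashi distance is the infimum of $\int_\gamma F^{\mathrm{Kob}}_{T_r}(z,dz)$ over $C^1$ curves. Granting that, the diameter of every piece $A_i$ in a Hausdorff cover, measured with $d_{\mathrm{Kob}}$, is at least $c$ times its Euclidean diameter, so each Hausdorff sum for the Kobayashi measure is $\ge c^k$ times the corresponding sum for the Euclidean measure, and passing to infima and then the supremum over $\epsilon$ gives the stated inequality. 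Everything else is bookkeeping.
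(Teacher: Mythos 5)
Your proof is correct and follows essentially the same route as the paper's: use Graham's boundary estimate plus compactness to obtain a global lower bound $F^{\mathrm{Kob}}_{T_r}\ge c\|\cdot\|$, deduce $\mu^k_{\mathrm{Kob},T_r}(g(M))\ge c^k\mu^k_{\mathrm{Euc}}(g(M))$, and then bound $\mu^k_{\mathrm{Euc}}(g(M))$ from below via the Lipschitz projection $\pi$ together with the surjectivity of $\pi\circ g$ coming from nonzero degree. The one place where you go beyond the paper is in spelling out the passage from the infinitesimal Kobayashi bound to the distance/diameter bound via Royden's integral representation, which the paper leaves implicit.
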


\noindent
\textit{Proof}; With $g\colon M \to T_r$, $\deg g \neq 0$, the composition
$\hat g := \pi \circ g \colon M \to M$ has nonzero degree and is hence 
surjective. 

The projection $\pi\colon T_R \to M$ admits a constant $C>0$ such that 
$\|\pi (x) - \pi (y)\| \le C \|x-y\|$ with respect to the standard Euclidean 
norm. Thus 
the Euclidean Hausdorff $k$-volume of $\pi\circ g (M)$ is at least as large 
as that of $M$, bounded away from $0$.

On the other hand, for $0<r<R$, $T_r$ is bounded strongly pseudoconvex.
So, by \cite{Graham}, the Kobayashi-Royden metric 
$F^\textrm{Kob}_{T_r}$ of $T_r$ goes to infinity compared to 
the Euclidean metric as the base point approaches the boundary of $T_r$.  
Hence there is a constant $c>0$ such that 
$F^\textrm{Kob}_{T_r} (q, w) \ge c \|w\|$
for any $q \in T_r, w \in \CC^n$.  Hence we obtain that 
\[
\mu^k_{\textrm{Kob}, T_r} (g(M)) \ge c^k \mu^k_{Euc} (g(M)).
\]
Since 
\[
\mu^k_{Euc} (g(M)) \ge C^{-k} \mu^k_{Euc} (\pi (g(M))) 
\ge C^{-k} \mu^k_{Euc} (M),
\]
it follows that $\inf_g \mu_{\textrm{Kob}, T_r}^k (g(M)) \ge
(c/C)^k  \mu^k_{Euc} (M) > 0$, as desired. 
\hfill $\Box$
\medskip

\begin{lemma} \label{k-vol-com}
$V^k_s < V^k_r$ if $ 0<r<s<R$.
\end{lemma}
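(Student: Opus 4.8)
The plan is to compare $V^k_s$ with $V^k_r$ by feeding \emph{every} admissible competitor for $V^k_r$ into the infimum defining $V^k_s$, and checking that its Hausdorff--Kobayashi measure strictly decreases by a fixed factor when the metric of $T_r$ is replaced by that of the larger tube $T_s$. The engine is a pointwise infinitesimal comparison of the Kobayashi--Royden metrics: I will produce $\epsilon \in (0,1)$, depending only on $r$ and $s$, with
\[
F^\textrm{Kob}_{T_r}(p,v) \ge (1+\epsilon)\, F^\textrm{Kob}_{T_s}(p,v) \qquad \textrm{for all } p \in T_r,\ v \in \CC^n .
\]

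First I would establish this infinitesimal estimate, exactly along the lines of the proof of Theorem \ref{gen-Hada}. Since $M$ is compact and $0<r<s<R$, one has $T_r \subset\subset T_s$, and $T_r$ has smooth strongly pseudoconvex boundary. By Graham's boundary growth \cite{Graham}, $F^\textrm{Kob}_{T_r}$ tends to $+\infty$ relative to the Euclidean norm as the base point approaches $\partial T_r$; since $F^\textrm{Kob}_{T_s}$ is, by \cite{Barth}, bounded above by a Euclidean multiple on the compact set $\textrm{cl}(T_r)\subset T_s$, choosing the growth constant large enough produces a compact $K\subset T_r$ with $F^\textrm{Kob}_{T_r}\ge 2 F^\textrm{Kob}_{T_s}$ on $(T_r\setminus K)\times\CC^n$. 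On $K$, Lemma \ref{compare} applied to $T_r\subset\subset T_s$ gives $c_K\in(0,1)$ with $F^\textrm{Kob}_{T_s}(p,v)\le c_K F^\textrm{Kob}_{T_r}(p,v)$, i.e.\ $F^\textrm{Kob}_{T_r}\ge c_K^{-1}F^\textrm{Kob}_{T_s}$ there. Taking $1+\epsilon:=\min\{2,\,c_K^{-1}\}>1$ yields the displayed inequality on all of $T_r$.

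Next I pass from the infinitesimal inequality to an estimate on Hausdorff--Kobayashi measures of subsets of $T_r$. For any $C^1$ curve $\gamma$ lying in $T_r$, integrating the pointwise bound gives $\int_\gamma F^\textrm{Kob}_{T_r}\ge(1+\epsilon)\int_\gamma F^\textrm{Kob}_{T_s}$; taking the infimum over such $\gamma$ joining $p$ to $q$ (a larger infimum than the one defining $d^\textrm{Kob}_{T_s}(p,q)$, which also allows curves leaving $T_r$) gives $d^\textrm{Kob}_{T_r}(p,q)\ge(1+\epsilon)\,d^\textrm{Kob}_{T_s}(p,q)$ for all $p,q\in T_r$. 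Hence every subset of $T_r$ has $d^\textrm{Kob}_{T_s}$-diameter at most $(1+\epsilon)^{-1}$ times its $d^\textrm{Kob}_{T_r}$-diameter, and therefore, covering any $A\subset T_r$ by subsets of $T_r$, one obtains the standard scaling estimate
\[
\mu^k_{\textrm{Kob},T_s}(A)\ \le\ (1+\epsilon)^{-k}\,\mu^k_{\textrm{Kob},T_r}(A).
\]

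Finally I conclude. Let $g\colon M\to T_r$ be any continuous, rectifiable map with $\deg g\neq 0$. Regarded as a map into $T_s$ it is still rectifiable, and its degree is unchanged: $\deg g$ is by definition the degree of $\pi\circ g\colon M\to M$, and the nearest-point projection onto $M$ is one and the same map on $T_s$, restricting to the corresponding projection on $T_r$, so $\pi_{T_s}\circ g=\pi_{T_r}\circ g$. Thus $g$ is an admissible competitor for $V^k_s$, and by the measure estimate above
\[
V^k_s\ \le\ \mu^k_{\textrm{Kob},T_s}(g(M))\ \le\ (1+\epsilon)^{-k}\,\mu^k_{\textrm{Kob},T_r}(g(M)).
\]
Taking the infimum over all such $g$ gives $V^k_s\le(1+\epsilon)^{-k}V^k_r$, and since $\epsilon>0$ and $V^k_r>0$ by Lemma \ref{pos}, this forces $V^k_s<V^k_r$. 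The only point that requires genuine care is the compatibility of the degree notion when the target tube is enlarged (so that a degree-nonzero competitor for $V^k_r$ remains one for $V^k_s$); the infinitesimal comparison and the passage to Hausdorff measures are then routine.
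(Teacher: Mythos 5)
Your proof is correct, and its overall shape is the same as the paper's: produce a uniform constant strictly less than $1$ comparing $F^{\textrm{Kob}}_{T_s}$ with $F^{\textrm{Kob}}_{T_r}$ on all of $T_r$, deduce that Hausdorff--Kobayashi $k$-measures of subsets of $T_r$ scale by the $k$-th power of that constant, feed every degree-nonzero competitor for $V^k_r$ into the infimum defining $V^k_s$, and finish with Lemma~\ref{pos}. Where you diverge is in how the uniform comparison is obtained: you rerun the argument of Theorem~\ref{gen-Hada}, combining Graham's boundary growth of $F^{\textrm{Kob}}_{T_r}$ with Lemma~\ref{compare} on a compact set, which leans on strong pseudoconvexity of $T_r$; the paper instead simply quotes Lemma~\ref{compare-2}, which already yields the uniform constant on all of $T_r$ from $\mathrm{cl}(T_r)\subset T_s$ alone via an intermediate domain, with no appeal to \cite{Graham}. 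Both routes are legitimate here because $r<R$, but the paper's is shorter and does not use pseudoconvexity. On the other hand you make explicit two steps the paper leaves implicit and which do deserve a word: the passage from the infinitesimal inequality to distances, diameters, and then Hausdorff measures, and the fact that a degree-nonzero competitor for $V^k_r$ remains one for $V^k_s$ because the nearest-point projection to $M$ is the same map on both tubes. One small point, common to your write-up and the paper's: the final strict inequality also tacitly uses finiteness of the relevant $V^k$, which is immediate since the inclusion $M\hookrightarrow T_r$ is a smooth degree-one competitor of finite Hausdorff--Kobayashi measure.
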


\noindent
\textit{Proof}: 
By Lemma \ref{compare-2}, the strict inclusion of 
the closure of $T_r$ into $T_s$ implies that there is a constant $c$ with $0<c<1$ such that 
$F^\textrm{Kob}_{T_s} (z, v)  \le  c F^\textrm{Kob}_{T_r} (z, v)$ for any $z \in T_r$ and $v \in \CC^n$.  Hence for each $g \colon M \to 
T_r$ of nonzero degree, 
\[
\mu^k_{\textrm{Kob}, T_s} (g(M)) \le c^k \mu^k_{\textrm{Kob}, T_r} (g(M)).
\]
So $V^k_s \le c^k V^k_r$.  Since $V_s > 0$ by Lemma \ref{pos}, it follows that $V^k_s < V^k_r$.
\hfill $\Box$.
\bigskip

\noindent
\textbf{Proof of Theorem \ref{homology-thm}}. Let $0<r<s<R$ as in the 
hypothesis.  Suppose that $F \colon T_s \to T_r$ is holomorphic.  And suppose,
for a proof by contradiction that $\deg F \neq 0$.  If a continuous map $g\colon
M \to T_s$ has a nonzero degree, then 
\[
\deg (F \circ g) = (\deg F)(\deg g) \neq 0.
\]
Thus 
\[
V^k_r \le \mu^k_{\textit{Kob}, T_r} (F \circ g (M)),
\]
since $F\circ g\colon M \to T_r$ is a continuous map with nonzero degree.
But $\mu^k_{\textit{Kob}, T_r} (F\circ g(M)) \le \mu^k_{\textit{Kob}, T_s} (g(M))$,
since $F$ is Kobayashi-metric nonincreasing.  Thus 
\[
V^k_r \le \mu^k_{\textit{Kob}, T_s} (g(M)),
\]
which implies that 
\[
V^k_r \le V^k_s.
\]
This contradicts Lemma \ref{k-vol-com}, and the proof is complete.
\hfill
$\Box$

\section{Contraction Mapping and Homotopy}

The situation of a map $f\colon U \to V$ with $V \subset U$ with $f$
distance nonincreasing for some metric $d_U$ on $U$ is a natural 
condition for considering the ideas associated to iterations of 
\textit{contraction mappings}.  It is a familiar and long-standing principle
of analysis that in the case where $f$ is distance nonincreasing by a 
factor $0\le \alpha < 1$, then $f$ must have a fixed point if $U$ is 
complete with respect to the metric $d_U$.  
This is a natural way to prove the existence of a short-term solution of 
ordinary differential equations, the local surjectivity in the Inverse Function 
Theorem via Newton's Method, and many other basic results (Cf.\ e.g.\ 
\cite{Gamelin-Greene}).
Such contraction mapping ideas were used in \cite{Earle-Hamilton} to prove
a fixed point theorem for holomorphic maps in Banach spaces, now known as 
the Earle-Hamilton Fixed Point Theorem.  The finite 
dimensional version was proved earlier in \cite{Reiffen}.  In both papers, the 
Carath{\' e}odory metric rather than the Kobayashi metric was used.  We 
point out also that the finite dimensional version was proved even earlier in 
\cite{Herve} p. 83 (p. 92, in the 2nd ed.), using the fact that any compact analytic 
set in the complex Euclidean space consists of finitely many points.
On the other hand, the following theorem shows that the Kobayashi metric 
can also be used in the contraction mapping context in a way similar to 
\cite{Reiffen} and \cite{Earle-Hamilton}.

\begin{theorem} \label{fix-pt}
If $U$ is a bounded domain in $\CC^n$ and $f\colon U \to U$ is a 
holomorphic mapping such that $f(U)$ is contained in a compact subset of 
$U$, then there is a unique point $z_0 \in U$ such that $f(z_0)=z_0$.  
Moreover, this $z_0$ is exactly the only point such that the iterates 
$f^n (z) = f \circ \ldots \circ f (z)$, $n$-times, of $f$ converge uniformly 
on $U$ to the constant map at $z_0$ on $U$.
\end{theorem}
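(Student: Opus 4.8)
The plan is to realize this as an application of the Banach fixed point theorem with respect to the Kobayashi distance $d_U$ on $U$. First I would observe that since $f(U)$ is contained in a compact subset $K$ of $U$, one may enlarge $K$ slightly so that $K$ is a compact subset of a relatively compact subdomain $\Omega_1$ with $f(U)\subset\Omega_1\subset\subset U$; then Lemma~\ref{compare-2} (applied to $\Omega_1 \subset\subset U$, or rather its proof which gives the comparison on the smaller set) yields a constant $c\in(0,1)$ with $F^\textrm{Kob}_{U}(p,v)\le c\,F^\textrm{Kob}_{\Omega_1}(p,v)$ for all $p$ in a neighborhood of $f(U)$. Since $f\colon U\to\Omega_1$ is holomorphic and holomorphic maps are Kobayashi-distance nonincreasing, for $p,q\in U$ one has $d_U(f(p),f(q))\le d_{\Omega_1}(f(p),f(q))$; but the infinitesimal comparison integrates along curves in $\Omega_1$ to give $d_U(x,y)\le c\,d_{\Omega_1}(x,y)$ for $x,y$ in $f(U)$, hence $d_U(f(p),f(q))\le c\,d_U(p,q)$. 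Thus $f$ is a strict contraction for the metric $d_U$ by the factor $c$.

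Next I would record that $(U,d_U)$ is a complete metric space: this is the standard fact that a bounded domain in $\CC^n$ is complete hyperbolic (Kobayashi distance dominates a constant multiple of Euclidean distance near any point by Barth's estimate cited earlier, and one checks Cauchy sequences cannot escape to the boundary because $d_U$ blows up there, or more simply one restricts attention to the complete subspace $\overline{f(U)}$, the Kobayashi-closed compact set, which $f$ maps into itself). With completeness and the contraction property in hand, the Banach fixed point theorem gives a unique $z_0$ with $f(z_0)=z_0$, and for every $z\in U$ the iterates satisfy $d_U(f^n(z),z_0)\le c^n d_U(f(z_1),z_0)\to 0$ where $z_1$ is any starting point; uniformity over $U$ follows because $d_U(f(z),f(w))\le c\,\mathrm{diam}_{d_U}(\overline{f(U)})$ is bounded, so after one application of $f$ all orbits lie in the compact set $\overline{f(U)}$ on which $d_U$ is finite and bounded, and then $d_U(f^n(z),z_0)\le c^{n-1}\,\mathrm{diam}_{d_U}(\overline{f(U)})$ uniformly in $z$.

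Finally I would translate $d_U$-convergence into uniform convergence in the Euclidean sense: since $d_U$ is locally comparable to the Euclidean distance (Barth), and all the points $f^n(z)$ for $n\ge 1$ lie in the fixed compact set $\overline{f(U)}$ on which the two metrics are globally comparable, $d_U(f^n(z),z_0)\to 0$ uniformly implies $\|f^n(z)-z_0\|\to 0$ uniformly on $U$. The statement that $z_0$ is the \emph{only} point to which some iterate sequence converges uniformly is immediate, since a uniform limit of $f^n$ is necessarily a fixed point of $f$ and the fixed point is unique.

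The main obstacle, and the step deserving the most care, is the passage from the infinitesimal inequality of Lemma~\ref{compare-2} to the multiplicative inequality $d_U(f(p),f(q))\le c\,d_U(p,q)$ on distances: one must argue that $d_U(x,y)\le c\,d_{\Omega_1}(x,y)$ for $x,y\in f(U)$, which requires knowing that near-geodesics for $d_{\Omega_1}$ joining two points of the compact set $f(U)$ can be taken to stay in a region where the infinitesimal comparison holds --- this is where enlarging $K$ to the intermediate domain $\Omega_1$ with $f(U)\subset\subset\Omega_1\subset\subset U$ is essential, so that the comparison constant is uniform along all competing curves, at least those nearly realizing the distance. Handling curves that wander far from $f(U)$ is harmless since they are automatically longer; the clean way is to note the Kobayashi distance in $\Omega_1$ between two points of $f(U)$ is realized up to $\epsilon$ by curves of bounded length staying in a fixed compact subset of $\Omega_1$.
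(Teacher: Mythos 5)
Your argument is essentially the paper's argument: build a strict contraction in the Kobayashi metric by combining Lemma~\ref{compare-2} (applied to an intermediate relatively compact subdomain) with the Kobayashi-nonincreasing property of $f$, then finish by a contraction-mapping iteration, using compactness of $\overline{f(U)}$ in place of completeness of $(U,d_U)$. Two small points are worth flagging.

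First, the worry you raise in your last paragraph --- that near-geodesics for $d_{\Omega_1}$ joining points of $f(U)$ must be shown to stay where the infinitesimal comparison holds --- is not a real issue. Lemma~\ref{compare-2} applied to $\Omega_1\subset\subset U$ gives $F^{\textrm{Kob}}_U(p,v)\le c\,F^{\textrm{Kob}}_{\Omega_1}(p,v)$ for \emph{every} $p\in\Omega_1$ and $v\in\CC^n$, not just near $f(U)$; and $d_{\Omega_1}(x,y)$ is by definition an infimum over curves lying in $\Omega_1$. So every competing curve for $d_{\Omega_1}$ automatically satisfies the length comparison, and $d_U(x,y)\le c\,d_{\Omega_1}(x,y)$ follows with no geodesic-localization argument at all. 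The paper sidesteps even this by composing the two infinitesimal inequalities $F^{\textrm{Kob}}_U(f(z),df_z v)\le c\,F^{\textrm{Kob}}_V(f(z),df_z v)\le c\,F^{\textrm{Kob}}_U(z,v)$ at a single point and then integrating once along curves in $U$, which is slightly cleaner, but your version is equally valid.

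Second, the throwaway claim that a bounded domain in $\CC^n$ is complete hyperbolic is false in general: for $n\ge 2$, $B\setminus\{0\}$ (unit ball minus a point) is bounded but its Kobayashi metric coincides with that of $B$, so it is not complete. You do immediately give the correct fix --- restrict to the compact set $\overline{f(U)}$, which $f$ preserves, so Cauchy sequences of iterates converge there --- and this is exactly the route the paper takes, so the proof is unaffected; just drop the incorrect general assertion.
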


\textit{Proof}.
Choose an open set $V$ in $\CC^n$ with its compact closure $\overline{V}$
in $\CC^n$ satisfying 
\[
f(U) \subset V \subset \overline{V} \subset U.
\]
Let $d_U$ be the Kobayashi distance on $U$.  Lemma \ref{compare-2} shows
that there exists $0\le c < 1$ such that 
\[
F^\textrm{Kob}_U(f(p), df|_p (v)) \le c \cdot F^\textrm{Kob}_V (f(p),df|_p (v)),
~\forall (p,v) \in U \times \CC^n,
\]
which in turn implies that 
\begin{align*}
F^\textrm{Kob}_U (f\circ\gamma(t)),(f\circ\gamma)'(t)) 
&\le c \cdot F^\textrm{Kob}_V (f\circ\gamma(t)),(f\circ\gamma)'(t)) \\
&\le c \cdot F^\textrm{Kob}_U (\gamma(t), \gamma'(t))
\end{align*}
for any $C^1$ curve $\gamma\colon [0,1] \to U$.  So we have
\[
F^\textrm{Kob}_U (f\circ\gamma(t)),(f\circ\gamma)'(t)) 
\le c \cdot F^\textrm{Kob}_U (\gamma(t), \gamma'(t))
\]
for any $C^1$ curve $\gamma\colon [0,1] \to U$, and consequently
\[
d_U (f(p), f(q)) \le c \cdot d_U (p,q), ~\forall p, q \in U.
\]
Namely, $f$ is a strict contraction with the contraction factor $c$.

Now denote by $f^n$ the $n$-th iterate of $f$ defined inductively
by
\[
f^1 = f, ~f^{n+1} = f \circ f^n \quad (n=1,2,\cdots).
\]
If $p \in U$, then it follows that the sequence $f^n (p)$ is a 
Cauchy sequence with respect to $d_U$.  Even if $U$ were not 
necessarily $d_U$-complete, this Cauchy sequence still converges because, 
for $n \ge 1$, $f^n (p)$ belongs to $\overline{V}$, a
compact set with $\overline{V} \subset U$.  Thus 
$z_0 := \lim\limits_{n\to\infty} f^n (p)$ exists in $\overline{V}$, and 
consequently in $U$. And $f(z_0)=z_0$.  Of course this is the 
unique fixed point: if $f(z_1) = z_1$ then 
\[
d_U (z_0, z_1) = d_U (f(z_0), f(z_1)) \le c \cdot d_U (z_0, z_1),
\]
which implies $d_U (z_0, z_1) = 0$ and $z_0=z_1$.

The uniform convergence of $f^n$ on $U$ follows by 
\begin{align*}
d_U (z_0, f^{k+1} (p)) 
& = d_U (f^{k+1}(z_0), f^{k+1}(p)) \\
& \le c^k d_U (f(z_0), f(p)) \\
&\le c^k d_U (z_0, f(p)),
\end{align*}
which implies
\[
\sup_{p \in U} d_U (z_0, f^{k+1}(p)) 
\le c^k \sup_{f(p) \in V} d_U (z_0, f(p)).
\]
Note that $\sup_{f(p) \in V} d_U (z_0, f^m(p))$ is bounded, since the 
Kobayashi distance is a continuous function by \cite{Barth}. Moreover, there 
exists a positive integer $m$ such that for some constant $C>0$ it holds that 
\[
\frac1C\ d_U (z_0, f^n(p)) \le \| z_0 - f^n (p)\| \le C\  d_U (z_0, f^n(p))
\]
for any $n > m$.  Altogether, we see that the convergence of $f^k$ to 
the constant function at $z_0$ is uniform on $U$,
\hfill
$\Box$
\bigskip

This recovers Theorem \ref{homotopy_thm}: With the notation therein, if 
$f\colon T_{r_1} \to T_{r_2}$ with $r_1 > r_2$ is holomorphic then, since
$T_{r_2}$ is relatively compact in $T_{r_1}$, there is a point $z_0 \in T_{r_1}$
with $f(z_0)=z_0$.  Consider $f^n$ with $n$ large.  Then as mentioned before 
\[
\deg (f^n) = \big(\deg f\big)^n.
\]
But $f^n (T_{r_1})$ is contained in a small ball around $z_0$.  So the 
composition $P_{r_2} \circ f^n \circ i$ must have degree $0$, and the
rest of necessary arguments follows immediately.
\bigskip

A variant of this idea also produces this result about connected open
subsets in $\CC^n$ and their images under holomorphic maps to themselves
where the image is contained in a relatively compact subset.

\begin{theorem} \label{contr}
If $U$ is a bounded connected open set with smooth boundary in $\CC^n$,
and if there is a holomorphic map $f\colon U \to U$ such that $f(U)$ is 
contained in a compact subset of $U$ and that, for some $z \in U$ and 
for all $k=1,2,\ldots$, the induced map $f_*\colon \pi_k (U, z) 
\to \pi_k (U, f(z))$ is an isomorphism, then $U$ is contractible.
\end{theorem}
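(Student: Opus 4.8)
The plan is to combine the contraction-mapping fixed point theorem (Theorem \ref{fix-pt}) with the hypothesis that $f$ induces isomorphisms on all homotopy groups. First I would apply Theorem \ref{fix-pt}: since $U$ is a bounded domain and $f(U)$ lies in a compact subset of $U$, there is a unique fixed point $z_0 \in U$, and moreover the iterates $f^n$ converge uniformly on $U$ to the constant map $c_{z_0}$ at $z_0$. This uniform convergence is the engine: for $n$ large, $f^n(U)$ is contained in an arbitrarily small Euclidean ball $B$ about $z_0$ with $\overline{B} \subset U$, and on such a ball $f^n$ is null-homotopic (the ball is contractible in $U$). Hence $f^n_* \colon \pi_k(U, z) \to \pi_k(U, f^n(z))$ is the zero/trivial homomorphism for every $k \ge 1$.

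Next I would exploit the isomorphism hypothesis. The assumption is that $f_* \colon \pi_k(U,z) \to \pi_k(U, f(z))$ is an isomorphism for every $k$; iterating (and keeping track of basepoints along the orbit $z, f(z), f^2(z), \ldots$, all of which can be connected by paths inside $U$ so the various $\pi_k$'s are canonically identified) shows that $f^n_* = (f_*)^n$ is also an isomorphism $\pi_k(U) \to \pi_k(U)$. But we just showed $f^n_*$ is trivial for $n$ large. An isomorphism that is simultaneously trivial forces $\pi_k(U) = 0$. Doing this for all $k \ge 1$ gives $\pi_k(U) = 0$ for every $k \ge 1$; since $U$ is connected, a bounded domain with smooth boundary is homotopy equivalent to a finite CW complex (it is a smooth manifold with boundary), so by the Whitehead theorem $U$ is contractible.

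The main obstacle is the careful bookkeeping of basepoints and the precise meaning of ``$f^n_*$ is trivial.'' One must be a little careful that $f^n \colon U \to U$, viewed as a map to $U$ with image in the small ball $B$, induces the zero map on $\pi_k$: this is because $f^n$ factors as $U \to B \hookrightarrow U$ and $\pi_k(B) = 0$, provided we use compatible basepoints, i.e. use $z_0$ (or $f^n(z)$, joined to $z_0$ by a short path in $B$) as the basepoint in the target. For $k = 1$ one should note the statement $\pi_1(U) = 0$ already combined with the higher triviality gives weak contractibility, and then invoke Whitehead; alternatively, since the $f_*$ on $\pi_1$ is an isomorphism and $f^n_*$ kills $\pi_1$, one gets $\pi_1(U) = 1$ directly, so $U$ is simply connected, and then the higher-homotopy argument shows $U$ is weakly contractible.

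A minor technical point worth addressing explicitly: the identification of $\pi_k(U, z)$ with $\pi_k(U, f^n(z))$ used to make sense of ``$(f_*)^n$'' requires a choice of path from $z$ to $f^n(z)$ in $U$, and the hypothesis that $f_*$ is an isomorphism should be read with the natural path-conjugation identifications built from the orbit; this is routine but should be stated. With that in place, $\pi_k(U) = 0$ for all $k \ge 1$ together with $U$ having the homotopy type of a CW complex yields contractibility by Whitehead's theorem, completing the proof.
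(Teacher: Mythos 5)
Your proposal is correct and follows essentially the same route as the paper: apply Theorem \ref{fix-pt} to obtain the fixed point $z_0$ and uniform convergence of the iterates $f^n$, use the small-ball factorization to show $f^n_*$ is trivial on every $\pi_k$ while the isomorphism hypothesis forces $\pi_k(U)=0$, and then invoke Whitehead's theorem together with the fact that $U$ has the homotopy type of a finite CW complex. The one step you assert rather than prove --- that a bounded open set with smooth boundary has the homotopy type of a finite CW complex --- is isolated in the paper as Lemma \ref{CW} and proved there by a Morse-theoretic argument on a smoothed exhaustion function built from $-\ln\operatorname{dist}(z,\partial U)$; the paper also avoids the basepoint bookkeeping you discuss by working throughout at the fixed point $z_0$.
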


The theorem is illustrated by a bounded open sets $U$ with smooth
boundary that are star-shaped around the origin with $f(z) = rz$ for a 
constant $r$ with $0<r<1$. 
\medskip

The proof involves two lemmas:

\begin{lemma} \label{CW}
Every bounded open set $U$ in $\RR^N$ with smooth boundary has the 
homotopy type of a finite CW complex.
\end{lemma}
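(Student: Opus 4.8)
The plan is to prove Lemma \ref{CW} by exhibiting $U$ as the interior of a compact smooth manifold-with-boundary and then invoking standard smooth manifold theory. First I would observe that since $\partial U$ is a smooth compact hypersurface in $\RR^N$, it has a tubular neighborhood, and by pushing slightly inward along the inward normal one obtains, for small $\epsilon>0$, a smooth compact manifold-with-boundary $\overline{U}_\epsilon \subset U$ whose boundary is a smooth hypersurface parallel to $\partial U$, and whose interior is diffeomorphic to $U$ itself (the collar $\partial U \times [0,\epsilon)$ deformation retracts, and more precisely $U$ is diffeomorphic to the interior of $\overline{U}_\epsilon$). Thus $U$ is diffeomorphic to the interior of a compact smooth manifold-with-boundary $N := \overline{U}_\epsilon$.

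Next I would use the fact that every compact smooth manifold-with-boundary admits a finite CW structure: this follows, for instance, from the existence of a Morse function on $N$ (with the boundary handled by a Morse function that is, say, constant on $\partial N$ or for which $\partial N$ is a regular level set), so that $N$ is built from finitely many handles, hence has the homotopy type of a finite CW complex with one cell for each critical point. Alternatively one may triangulate $N$ (Whitehead's theorem that smooth manifolds are triangulable), which gives a finite simplicial complex since $N$ is compact. Since $N$ and its interior $\mathrm{int}(N)$ are homotopy equivalent (the interior deformation retracts onto $N$ minus a collar, which is diffeomorphic to $N$, via the collar neighborhood theorem applied to $\partial N$), we conclude that $U \simeq \mathrm{int}(N) \simeq N$ has the homotopy type of a finite CW complex.

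The main obstacle — really the only nontrivial input — is the assertion that $U$ is diffeomorphic to the interior of a compact smooth manifold-with-boundary. This needs the smoothness hypothesis on $\partial U$ in an essential way (it fails for general bounded open sets, e.g.\ ones with infinitely many components or wild boundary), and the argument rests on the collar neighborhood theorem / tubular neighborhood theorem for the smooth hypersurface $\partial U \subset \RR^N$. Once that structural fact is in hand, everything else is a citation to standard differential topology (Morse theory or the triangulation theorem for smooth manifolds). I would therefore keep the write-up short: state the reduction to $\mathrm{int}(N)$ with a one-line justification via the collar, then cite the finite-CW-structure theorem for compact smooth manifolds-with-boundary, and note the homotopy equivalence $\mathrm{int}(N) \simeq N$.
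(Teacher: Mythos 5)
Your proof is correct, but it takes a genuinely different route from the paper's. The paper works directly on the open set $U$: it sets $d(z) = \operatorname{dist}(z,\, \mathbb{R}^N\setminus U)$, forms the exhaustion $\delta = -\ln d$ (which has nowhere-vanishing gradient near $\partial U$ because $d$ is smooth with unit gradient there), convolution-smooths $\delta$ near a compact core, perturbs by a Morse-theoretic argument to obtain only nondegenerate critical points on that core, and then patches the smoothed interior piece to the original $\delta$ on the collar by a partition of unity. The resulting function $\Delta_2$ is a smooth exhaustion of $U$ whose critical points are finitely many, nondegenerate, and confined to a compact set, and the finite CW type then follows from Morse theory for exhaustion functions. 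You instead reduce to the compact case: the smoothness of $\partial U$ means $\overline{U}$ (or, in your variant, the shrunken $\overline{U}_\epsilon$) is a compact smooth manifold-with-boundary, which by textbook results (handle decomposition via Morse theory on compact manifolds-with-boundary, or triangulability) has the homotopy type of a finite CW complex; the collar-neighborhood theorem then identifies $U$ with the interior up to homotopy equivalence (indeed diffeomorphism for the shrunken version). The comparison: your argument is shorter and cites more, making the key structural fact (compact manifold-with-boundary) explicit; the paper's argument is longer but self-contained, in effect reproving the ingredient you cite by building the Morse exhaustion by hand. One small stylistic simplification available to you: since $\partial U$ is smooth, $\overline{U}$ itself already is a compact manifold-with-boundary with interior $U$, so you can skip the inward-pushed $\overline{U}_\epsilon$ and work with $\overline{U}$ directly, using the collar to see $U\simeq\overline{U}$.
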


\begin{lemma} \label{trivial-homotopy}
With $U$ and $f$ as in the hypotheses of Theorem \ref{contr}, and 
with $z_0$ the fixed point of $f$ (which has already been shown to exist), 
the $k$-th homotopy group $\pi_k (U) = 0$ for any $k=1,2,\cdots$.
\end{lemma}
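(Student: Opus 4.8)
The plan is to prove Lemma \ref{trivial-homotopy} using the contraction property of $f$ together with the strict monotonicity of the Kobayashi-Royden metric established in Lemma \ref{compare-2} and the finiteness furnished by Lemma \ref{CW}. Fix $k \geq 1$ and let $\varphi \colon S^k \to U$ be any continuous map; our goal is to show $\varphi$ is null-homotopic. By Lemma \ref{CW}, $U$ has the homotopy type of a finite CW complex, hence each $\pi_k(U)$ is a finitely generated abelian group (for $k \geq 2$; for $k=1$ it is finitely generated but the argument will show it is trivial anyway once we know its image under an automorphism can be made arbitrarily small). The key point is that $f_* \colon \pi_k(U, z) \to \pi_k(U, f(z))$ is an isomorphism by hypothesis, so after identifying base points along a path one gets an automorphism of $\pi_k(U)$; consequently $f^n_*$ is also an automorphism for every $n$, and every homotopy class in $\pi_k(U)$ is represented by a map of the form $f^n \circ \psi$ for a suitable $\psi \colon S^k \to U$ (namely $\psi$ representing $(f^n_*)^{-1}[\varphi]$).

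First I would set up a size estimate. As in the proof of Theorem \ref{fix-pt}, choose $V$ with $f(U) \subset V \subset \overline{V} \subset U$ and let $0 \le c < 1$ be the contraction constant from Lemma \ref{compare-2}, so that $d_U(f(p), f(q)) \le c\, d_U(p,q)$ for all $p, q \in U$, where $d_U$ is the Kobayashi distance. Iterating, $d_U(f^n(p), f^n(q)) \le c^n d_U(p,q)$, and since $f(U) \subset \overline{V}$ which is a compact subset of $U$, the Kobayashi distance is bounded on $f(U) \times f(U)$ by some $D < \infty$ (continuity of $d_U$, \cite{Barth}). Hence $\mathrm{diam}_{d_U}\big(f^n(U)\big) \le c^{n-1} D \to 0$ as $n \to \infty$. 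Because the Kobayashi distance on $U$ is locally comparable to the Euclidean distance, for $n$ large the set $f^n(U)$ lies in an arbitrarily small Euclidean ball inside $U$, which is in particular Euclidean-convex and hence contractible within $U$.

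Now I would combine these ingredients. Given $[\varphi] \in \pi_k(U)$, pick $n$ large enough that $f^n(U)$ lies in a convex Euclidean ball $B \subset U$. Let $\psi \colon S^k \to U$ represent $(f^n_*)^{-1}[\varphi]$, so that $f^n \circ \psi$ represents $[\varphi]$ (here I would be slightly careful about base points: the isomorphism $f_*$ is between homotopy groups at $z$ and $f(z)$, but choosing a path from $f^n(z)$ back to $z$ and using that $U$ is path-connected lets one transport everything to a single base point, and the standard change-of-base-point argument shows the relevant composite is still an automorphism). But the image of $f^n \circ \psi$ lies in $B$, a contractible subset of $U$, so $f^n \circ \psi$ — and therefore $\varphi$ — is null-homotopic in $U$. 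Since $[\varphi] \in \pi_k(U)$ was arbitrary, $\pi_k(U) = 0$.

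The main obstacle is the base-point bookkeeping: the hypothesis gives $f_*$ as an isomorphism $\pi_k(U,z) \to \pi_k(U, f(z))$, and to iterate and invert cleanly one must fix a reference base point and conjugate by paths, verifying that the resulting self-map of $\pi_k(U, z)$ is genuinely an automorphism (this uses only that $U$ is path-connected, so that change-of-base-point isomorphisms exist, plus functoriality). A secondary technical point is justifying "$d_U$-small implies Euclidean-small" uniformly, i.e. that shrinking Kobayashi diameter forces the set into a small Euclidean ball; this follows from the local comparability of $F^\textrm{Kob}_U$ with the Euclidean norm (\cite{Barth}) together with the fact that $f^n(U) \subset \overline{V}$ stays in a fixed compact set, on which the two distances are uniformly comparable. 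Neither of these is deep, so the lemma should go through as sketched.
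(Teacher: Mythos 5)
Your proposal is correct and is essentially the paper's argument: both prove triviality of a class in $\pi_k(U,z_0)$ by iterating $f$ until the image of the sphere lands in a small Euclidean ball (using the Kobayashi contraction and the Kobayashi--Euclidean comparison) and then pulling back through the automorphism $f^n_*$. The only cosmetic differences are that you apply $(f^n_*)^{-1}$ first and then push forward, whereas the paper pushes $\Gamma$ forward by $f^n$ and then inverts, and your appeal to Lemma \ref{CW} for finite generation is not actually used (it is needed later for Whitehead's theorem, not here); your extra care with base-point transport is a point the paper elides.
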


\textbf{The proof of Theorem \ref{contr}} follows from these two lemmas and the 
Whitehead theorem (\cite{Whitehead}. See also \cite{Hatcher}, 
Theorem 4.5 in page 346), since $f$ and the constant map 
$z_0$ have the same action on the homotopy groups of $U$ at $z_0$.
[The general result of Whitehead is that, 
\textit{if both $X$ and $Y$ have the homotopy type of connected finite
CW complexes and if two continuous maps $f, g \colon X \to Y$, 
where $f$ is a homotopy equivalence, satisfy the properties
that $f(x_0)=g(x_0)$ for some $x_0 \in X$ and $g_*, f_* :
\pi_k (X, x_0) \to \pi_k (Y,  f(x_0))$ are identical for every $k=1,2,\cdots$,
then $f$ is homotopic to $g$.}]
In our case, $f_* = 0$ and $g$ is to be the constant map at $z_0$, so both
$f_*$ and $g_*$ coincide as the trivial map of $\pi_k$ for every $k$.
\hfill
$\Box$
\bigskip

\begin{remark} \textup{
The open set $U$ is homotopically equivalent to a finite CW complex, but 
not homeomorphic to.  Note for instance that any finite CW complex is 
compact.  On the other hand, the fact that $U$ is of the same homotopy type 
with a finite CW complex is sufficient for the preceding proof.}
\end{remark}
\medskip

\textbf{Proof of Lemma \ref{CW}}: 
Let $d(z) = \textrm{dist } (z, \CC \setminus U)$, where 
$\textrm{dist } (z, A) = \inf \{\|z-w\| \colon w \in A\}$.  Here of
course $\|~\|$ is the Euclidean norm of $\CC^n$.  Since $U$ is open, 
$d(z) > 0$ for every $z \in U$ and the map $z \to d(z)$ is Lipschitz continous.
Also, since $U$ has smooth boundary, $d(z)$ is of class $C^1$ at least
($C^\infty$ if $U$ has $C^\infty$ boundary) on 
$\mathcal{N}_\epsilon := \{z \in U \colon d(z)<\epsilon\}$ 
for sufficiently small a constant $\epsilon > 0$, and 
$\|\textsl{grad } d\| = 1$ at every $z \in U$ with 
$d(z)<\epsilon$, where $\textsl{grad } d$ is the real Euclidean gradient.

Now set $\delta (z) = - \ln d(z)$.  Then $\delta$ is smooth on 
$\mathcal{N}_\epsilon$, and $\|\textsl{grad }\delta (z)\| \ge 1/\epsilon$
for every $z \in \mathcal{N}_\epsilon$.  (In fact, the inequality is $=$, but 
we do not need it here.) Now, for notational convenience,
let 
\[
K_1 = \{ z \in U \colon d(z) \ge \epsilon \}, \quad
K_m = \{ z \in U \colon d(z) \ge \epsilon/m \}
\]
for $m=1,2,\cdots$. (We shall need only the first few of these).  The sets $K_m$
defined as such are compact in $\CC^n$. 

Now $d$ may not be smooth on $U$; indeed, $d$ cannot be smooth since 
$d$ has a maximum in $U$ but $\|\textsl{grad } d\|=1$ at every point
at which $d$ is of class $C^1$.  So $\delta$ is also nonsmooth at some
points.  However, by standard convolution smoothing arguments, there is a 
smooth function, say $\Delta$, such that $\Delta$ is uniformly close to
$\delta$ on $K_4$ but (uniformly) $C^1$-close to $\delta$ on 
$K_4 \setminus K_1$.  By usual Morse theoretic considerations 
(\cite{Morse}.  Cf.\ Section I.6 of \cite{Milnor}. See also \cite{GW}.)
there is a function $\Delta_1$ which is uniformly $C^1$-close 
to $\Delta$ on $K_3$ and has only nondegenerate critical points.  Since 
$\Delta$ and $\delta$ are $C^1$-close on $K_4 \setminus K_1$ and 
$\|\textsl{grad } \delta\| \ge 1/\epsilon$ on $K_4 \setminus K_1$, it 
follows by the usual partition of unity argument that $\Delta$ on the interior
of $K_3$, and $\delta$ on $U$ can be patched together to yield a function
$\Delta_2$, say, such that 
\[
\Delta_2 (z) = \Delta (z), \forall z \in K_3
\]
with 
$\|\textsl{grad } \Delta_2\| \ge 1/(2\epsilon)$ on $U \setminus K_1$,
and such that 
\[
\Delta_2 (z) = \delta (z), \forall z \in U \setminus K_1.
\]
Then $\Delta_2$ is an exhaustion function for $U$ with only nondegenerate
critical points, since $\Delta_2$ has no critical points in $U \setminus K_1$ 
and hence only nondegenerate critical points in $U$, which necessarily 
lie in the set where $\Delta = \Delta_1$.

The standard Morse theory gives now that $U$ has the homotopy type of
a finite CW complex, with cells given by the finite number of nondegenerate
critical points of $\Delta_2$.
\hfill
$\Box$
\bigskip

\textbf{Proof of Lemma \ref{trivial-homotopy}}:
Let $z_0$ be the fixed point of $f$.  Suppose that 
$\Gamma\colon S^k \to U$ is a representation of a $k$-homotopy
class in $\pi_k (U, z_0)$.  By Theorem \ref{fix-pt},  
the iterates $f^n \circ \Gamma$ converge
uniformly on $S^k$ to the constant map at $z_0$.  In particular,
if a positive constant $r$ is such that 
$B^n (z_0, r) = \{z \colon \|z-z_0\|<r\} \subset U$
and $f^n \Gamma (S^k) \subset B^n (z_0, r)$, then 
$[f^n\circ \Gamma] = 0$ in $\pi_k (U,z_0)$. But by hypothesis, 
$f^n_* \colon \pi_k (U, z_0) \to \pi_k (U, z_0)$ is an isomorphism.  
So $[\Gamma] = 0$ in $\pi_k (U, z_0)$.
\hfill
$\Box$
\bigskip

\section*{Acknowledgements}

After this article was written and posted in \texttt{arXiv.org}, 
T. Pacini kindly informed us the relevance of \cite{Pacini}.  We express our
gratitude for this. 

Research of the second named author (Kim) is partially supported by
the NRF Grant 4.0021348 of The Republic of Korea.

\end{document}